\newtheorem{theorem}{Theorem}
\newtheorem{lemma}{Lemma}
\newtheorem{definition}{Definition}
\newtheorem{proof}{Proof}
\DeclareMathAlphabet{\pazocal}{OMS}{zplm}{m}{n}
\definecolor{mycolor}{rgb}{0.122, 0.435, 0.698}
\definecolor{blue-green}{rgb}{0.0, 0.87, 0.87}
\definecolor{royalblue}{rgb}{0.01, 0.28, 1.0}
\definecolor{bluet}{rgb}{0.1, 0.1, 0.8}
\newmdenv[innerlinewidth=1pt, roundcorner=0.5pt,linecolor=mycolor,innerleftmargin=0.5pt,
innerrightmargin=0.5pt,innertopmargin=0.4pt,innerbottommargin=0.4pt]{mybox}
\DeclareMathAlphabet{\pazocal}{OMS}{zplm}{m}{n}
\newcommand{\m}{\mathbf}
\newcommand{\tra}{\top}
\newcommand{\wt}{\widetilde}
\newcommand{\wh}{\widehat}
\newcommand{\kp}{\kappa}
\newcommand{\s}{\sigma}
\newcommand{\w}{w}
\newcommand{\el}{\m{e}_1}
\newcommand{\er}{\m{e}_n}
\newcommand{\ev}{\delta}
\newcommand{\eu}{\gamma}
\newcommand{\thet}{\theta}
\newcommand{\ba}{\begin{array}}
\newcommand{\ea}{\end{array}}
\newcommand{\be}{\begin{equation}}
\newcommand{\ee}{\end{equation}}
\newcommand{\ben}{\begin{equation*}}
\newcommand{\een}{\end{equation*}}
\newcommand{\bd}{\begin{displaymath}}
\newcommand{\ed}{\end{displaymath}}
\newcommand{\bi}{\begin{itemize}}
\newcommand{\ei}{\end{itemize}}
\newcommand{\bn}{\begin{enumerate}}
\newcommand{\en}{\end{enumerate}}
\newcommand{\f}{\frac}
\newcommand{\ve}{\varepsilon}
\begin{document}

\title{Upwind summation-by-parts finite differences: error estimates and WENO methodology}


\author{
  Yan Jiang\thanks{School of Mathematical Sciences, University of Science and Technology of China, Hefei, 230026, Anhui, China.
  ({jiangy@ustc.edu.cn}).}
  \and Siyang Wang\thanks{Corresponding author. Department of Mathematics and Mathematical Statistics, Ume{\aa} University, Ume\aa,  Sweden
  ({siyang.wang@umu.se}).}}
\maketitle


\abstract{High order upwind summation-by-parts finite difference operators have recently been developed. When combined with the simultaneous approximation term method to impose boundary conditions, the method converges faster than using traditional summation-by-parts operators. We prove the convergence rate by the normal mode analysis for such methods for a class of hyperbolic partial differential equations. Our analysis shows that the penalty parameter for imposing boundary conditions affects the convergence rate for stable methods. In addition, to solve problems with discontinuous data, we extend the method to also have the weighted essentially nonoscillatory property.  The overall method is stable, achieves high order accuracy for smooth problems, and is capable of solving problems with discontinuities. 
}

\noindent \textbf{Keywords}: Finite difference methods, Summation-by-parts, Weighted essentially nonoscillatory methods, Conservation law, Stability, Accuracy

\noindent \textbf{ AMS}: 65M12

\section{Introduction}
By the classical dispersion analysis, high order methods are more efficient than low order method for solving wave propagation problems with sufficient smoothness  \cite{Hagstrom2012,Kreiss1972}. In many applications, wave propagation can be modeled by hyperbolic partial differential equations satisfying energy conservation or dissipation. For robustness of a numerical method, it is important that the discretization satisfies a corresponding discrete energy estimate. Such methods are often referred to as energy stable. When using finite difference methods to solve initial-boundary-value problems, a summation-by-parts (SBP) property is crucial to prove energy stability of the discretization. The SBP property discretely mimics the integration-by-parts principle, which is used to derive the continuous energy estimate.  In \cite{Kreiss1974}, Kreiss and Scherer introduced the SBP concept and constructed traditional SBP operators for the first derivative. These traditional SBP operators are based on central finite difference stencils in the interior, and special stencils on a few grid points near boundaries. The simultaneous-approximation-term (SAT) method is often used to weakly impose boundary conditions \cite{Carpenter1994}.  The SBP-SAT discretization has been derived and analyzed for a wide range of hyperbolic and parabolic problems, see the reviews \cite{Del2014Review,Svard2014}. 

Upwind SBP operators are constructed for the first derivative \cite{Mattsson2017}. These operators are based on biased finite difference stencils in the interior, and come in pairs for a given accuracy. When combining with the SAT method to impose boundary conditions, the upwind SBP-SAT discretization naturally introduces dissipation, which can lead to significantly smaller errors than the traditional SBP-SAT discretization without dissipation. As an example, it was observed in the numerical experiments in \cite{Mattsson2017}  that the upwind SBP-SAT method has a higher-than-expected convergence rate when solving the advection equation and a first-order linear hyperbolic system. The first main contribution of this paper is to derive error estimates by the normal mode analysis for a class of upwind SBP-SAT schemes. We prove that, with a particular choice of penalty parameters in the SAT method, the convergence rate is indeed higher than using traditional SBP operators.

For problems with piecewise smooth coefficients  modeling discontinuous media,  stable and high order multiblock SBP-SAT discretizations have been derived for hyperbolic and parabolic problems \cite{Almquist2019,Carpenter1999,Ludvigsson2018,Nordstrom1999}. The central idea is to decompose the domain into subdomains such that material discontinuities are aligned with the subdomain interfaces. An SBP finite difference discretization is constructed in each subdomain, and adjacent subdomains are patched together by the SAT method for physical interface conditions.  However, if the solution discontinuities move in time, the multiblock strategy cannot be used in a straightforward way. A standard finite difference discretization based on fixed stencils gives numerical solutions with oscillations near discontinuities. One technique to eliminate such spurious oscillations is the weighted essentially nonoscillatory (WENO)  method \cite{Jiang1996,Liu1994}. In a WENO scheme, a nonlinear adaptive procedure automatically selects the smoothest local stencil for avoiding stencils across discontinuities. The WENO schemes have been extensively developed to solve hyperbolic conservation laws. 
In recent years, a series of work on unequal-sized sub-stencils have been studied, improving the robustness and computational efficiency, as well as smaller residue convergences for steady-state problems \cite{ZHU2018659, ZHU201919, ZHU2020}.
We note that, however, energy stability can in general not be established for standard WENO schemes. 

Efforts have been made to develop schemes that have both the SBP and the WENO property, which is often referred to as energy stable WENO (ESWENO) method. An ESWENO method for the advection equation with periodic boundary conditions was developed in \cite{Yamaleev2009b,Yamaleev2009}. Based on traditional SBP operators and the SAT method, the ESWENO methodology was further developed to a fourth order accurate discretization for the advection equation with inflow boundary conditions \cite{Fisher2011}. Motivated by the superior accuracy property of upwind SBP finite difference discretization in \cite{Mattsson2017}, we extend the methodology and develop an upwind ESWENO method.

The rest of the paper is organized as follows. In Sec.~2, we introduce the upwind SBP property, and the SAT method for imposing boundary conditions. A discrete energy estimate is derived to ensure stability of the semidiscretization. In Sec.~3, we derive error estimates for the upwind SBP-SAT discretization using the normal mode analysis. In Sec.~4, we derive an ESWENO method based on the upwind SBP operators. Numerical examples are presented in Sec.~5 to verify the error estimates from Sec.~3 and the performance of the upwind ESWENO method from Sec.~4. Finally, we draw conclusion in Sec.~6.

\section{Spatial approximation}
\subsection{SBP property}

Consider a uniform grid $\m{x}$ in domain $[0,1]$ with grid points 
\begin{equation}\label{1d_grid}
x_j := (j-1)h,\quad j=1,\cdots,n, \quad h:=\frac{1}{n-1},
\end{equation}
where $n$ is the number of grid points and $h$ is the grid spacing. Central finite difference stencils can easily be constructed on the grid points away from the boundaries by using Taylor expansion. On the few grid points close to the boundaries, one-sided stencils are used as boundary closure. The boundary closure can be specially constructed so that the finite difference stencils satisfy a SBP property, which is a discrete analogue of the integration-by-parts principle. The SBP concept was first introduced by Kreiss and Scherer in \cite{Kreiss1974} on uniform grid \eqref{1d_grid}. We refer to this concept as \textit{traditional SBP}, and restate its definition below. 

\begin{definition}[traditional SBP]
The difference operator $D$ is a first derivative SBP operator if it satisfies 
\[
D = H^{-1} \left(Q+\frac{B}{2}\right),
\]
where $H$ is symmetric positive definite, $Q=-Q^\tra$ and $B=\text{diag}([-1,0,0,\cdots,0,0,1])$.
\end{definition}

The operator $H$ defines a discrete inner product, thus a discrete norm $\|\cdot\|_H$. It is also a quadrature \cite{Hicken2016}. 
In a traditional SBP operator, a standard central finite difference stencil is used on the grid points  in the interior; on a few grid points close to boundaries, special boundary closures are constructed so that the SBP property is satisfied. With a $p^{th}$ order accurate ($p$ is even) central finite difference stencil in the interior, the boundary closures can at most be $(p/2)^{th}$ order accurate. In \cite{Kreiss1974}, the SBP  operators are constructed for $p=2,4,6,8$ with boundary closures on the first $p/2$  grid points. It is important to note that the number of grid points with the boundary closures is independent of grid spacing $h$. 
When using the traditional SBP operator to solve first order hyperbolic problems, the convergence rate is in general $p/2+1$ \cite{Gustafsson1975,Gustafsson1981}.

Recently, Mattsson {\color{black}constructed} upwind SBP operators \cite{Mattsson2017}. Comparing with the traditional SBP operators, the main difference is that noncentered finite difference stencils are used on the grid points in the interior. 
Upwind SBP operators with $p$ from 2 to 9 were constructed in  \cite{Mattsson2017}, and for each $p$ the operators come in pairs, with one operator skewed to the left and the other operator skewed to the right. 
\begin{definition}[upwind SBP]\label{def_upwind}
The difference operator $D_m$ and $D_p$ are a pair of first derivative upwind SBP operators if they can be written as 
\[
D_m = H^{-1} \left(Q_m+\frac{B}{2}\right),\quad D_p = H^{-1} \left(Q_p+\frac{B}{2}\right),
\]
where $Q_m=-Q_p^\tra$, $Q_m+Q_m^\tra$ is symmetric positive semidefinite, and $B=\text{diag}([-1,0,0,\cdots,0,0,1])$.
\end{definition}
Similar to the traditional SBP operators, the accuracy near boundaries is lower than in the interior. With a $p^{th}$ order accurate  interior stencil, the accuracy of the boundary closures is $(p-1)/2$ on the first $(p+1)/2$ grid points for odd $p$, and $p/2$ on the first $p/2$ grid points for even $p$. The subscripts $m$ and $p$ are abbreviations of \textit{minus} and \textit{plus}, corresponding to biased stencils to the left and right directions, respectively. 

\subsection{An SBP-SAT discretization}
We consider the linear advection equation as our model problem,
\begin{equation}\label{eqn_adv}
U_t + U_x = 0,\quad x\in [0,1],\ t\in [0,1],
\end{equation}
with  inflow boundary condition $U(0,t) = g(t)$. In this case, the initial profile of $U$ is transported from left to right, and the upwind direction is to the left.    

 We discretize the equation in space on a uniform grid \eqref{1d_grid}. Let $\m{u}=[u_1, u_2, \cdots, u_n]^T$ be the finite difference solution, where $u_j\approx U(x_j,t)$. The SBP-SAT discretization of \eqref{eqn_adv} reads
\begin{equation}\label{semi_adv}
\m{u}_t + D_m \m{u} = \tau H^{-1}\m{e_0} (u_1-g(t)),
\end{equation}
where $\m{e_0}:=[1,0,0,\cdots,0]^\tra$. The term on the right-hand side is a penalty term weakly imposing the boundary condition $U(0,t) = g(t)$, and the penalty parameter $\tau$ is determined in the stability analysis so that \eqref{semi_adv} satisfies a discrete energy estimate. 

\begin{lemma}\label{lemma_sta}
The semidiscretization \eqref{semi_adv} with $g(t)\equiv 0$ satisfies the energy estimate $\f{d}{dt}\|\m{u}\|_H^2\leq 0$ if the penalty parameter $\tau\leq -1/2$.
\end{lemma}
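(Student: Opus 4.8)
The plan is to carry out a discrete energy analysis that mirrors the continuous integration-by-parts argument, exploiting the SBP decomposition $D_m = H^{-1}(Q_m + B/2)$ from Definition~\ref{def_upwind}. I would start from the discrete energy $\|\m{u}\|_H^2 = \m{u}^\tra H \m{u}$ and differentiate in time. Since $H$ is symmetric and constant, this gives $\f{d}{dt}\|\m{u}\|_H^2 = 2\m{u}^\tra H \m{u}_t$. Substituting the semidiscretization \eqref{semi_adv} with $g\equiv 0$, namely $\m{u}_t = -D_m\m{u} + \tau H^{-1}\m{e_0}\,u_1$, and using $HD_m = Q_m + B/2$, I would obtain
\be
\f{d}{dt}\|\m{u}\|_H^2 = -\m{u}^\tra (Q_m+Q_m^\tra)\m{u} - \m{u}^\tra B\m{u} + 2\tau u_1^2,
\ee
where the first term arises from symmetrizing $-2\m{u}^\tra Q_m \m{u}$, and the penalty contributes $2\tau\,\m{u}^\tra \m{e_0}\,u_1 = 2\tau u_1^2$ since $\m{e_0}=[1,0,\cdots,0]^\tra$.

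The key structural step is to control each of the three terms separately. The term $-\m{u}^\tra(Q_m+Q_m^\tra)\m{u}$ is nonpositive because $Q_m+Q_m^\tra$ is symmetric positive semidefinite, which is exactly the upwind SBP hypothesis; this is the discrete counterpart of the artificial dissipation introduced by the biased stencils. For the boundary term, since $B = \text{diag}([-1,0,\cdots,0,1])$ I have $\m{u}^\tra B \m{u} = u_n^2 - u_1^2$, so $-\m{u}^\tra B\m{u} = u_1^2 - u_n^2$. Collecting the contributions at the left boundary gives
\be
\f{d}{dt}\|\m{u}\|_H^2 = -\m{u}^\tra(Q_m+Q_m^\tra)\m{u} - u_n^2 + (1+2\tau)\,u_1^2.
\ee

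Finally I would read off the condition on $\tau$: the first two terms on the right are always nonpositive, so the sign of the whole expression is governed by the coefficient $1+2\tau$ of $u_1^2$. Requiring $1+2\tau\le 0$, i.e.\ $\tau\le -1/2$, forces the left-boundary contribution to be nonpositive as well, yielding $\f{d}{dt}\|\m{u}\|_H^2\le 0$ and completing the proof. I do not expect a genuine obstacle here, since the argument is a routine SBP--SAT energy estimate; the only point demanding care is the bookkeeping of the boundary terms, specifically combining the $-u_1^2$ coming from $B$ with the $2\tau u_1^2$ coming from the SAT penalty so that the threshold $\tau\le -1/2$ emerges cleanly. It is worth noting that the outflow term $-u_n^2$ is automatically dissipative and needs no penalty, which is consistent with the left-to-right transport in \eqref{eqn_adv}.
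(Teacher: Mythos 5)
Your proposal is correct and is essentially the paper's own proof: the paper multiplies \eqref{semi_adv} by $\m{u}^\tra H$ and adds the transpose, which is the same symmetrization you perform directly on $-2\m{u}^\tra Q_m\m{u}$, and both arrive at the identity $\f{d}{dt}\|\m{u}\|_H^2 = -\m{u}^\tra(Q_m+Q_m^\tra)\m{u} + (1+2\tau)u_1^2 - u_n^2$ before invoking positive semidefiniteness of $Q_m+Q_m^\tra$ and $\tau\le -1/2$. No gaps; the bookkeeping of the boundary terms matches the paper exactly.
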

\begin{proof}
We multiply \eqref{semi_adv} by $\m{u}^\tra H$ on both sides, and obtain
\[
\m{u}^\tra H \m{u}_t + \m{u}^\tra H D_m \m{u} = \tau (u_1)^2.
\]
Adding the above equation to its tranpose, we have 
\begin{align*}
\f{d}{dt}\|u\|_H^2 &= -\m{u}^\tra  (H D_m+(H D_m)^\tra) \m{u} +  2\tau (u_1)^2 \\
&=-\m{u}^\tra  (Q_m+Q_m^\tra+B) \m{u} +  2\tau (u_1)^2\\
&=-\m{u}^\tra  (Q_m+Q_m^\tra) \m{u} +(2\tau+1) (u_1)^2- (u_n)^2\\
&\leq 0,
\end{align*}
if $\tau\leq -1/2$. We have used  $\m{u}^\tra  (Q_m+Q_m^\tra) \m{u} \geq 0$ by Definition \ref{def_upwind}.
\end{proof}
We call such a semidiscretization energy stable. 
In the above energy analysis,  $-\m{u}^\tra  (Q_m+Q_m^\tra) \m{u}$ is an artificial dissipation term. The same analysis carries over to the case with traditional SBP operators but without any artificial dissipation.

The convergence rate in $L^2$ norm of the traditional SBP-SAT discretization of the advection equation is $p/2+1$, i.e. one order higher than the order of truncation error of the boundary closure. This phenomenon, often termed as \textit{gain in convergence}, was theoretically analyzed in \cite{Gustafsson1975,Gustafsson1981} using the normal mode analysis. For the upwind SBP-SAT discretization, it is observed in \cite{Mattsson2017} that the \textit{gain in convergence} is more than one order. In the following section, we prove that for the advection equation, with most of the choices of the penalty parameter $\tau\leq -1/2$ the \textit{gain in convergence} is one order, i.e. the same as the traditional SBP-SAT discretization; the special choice of $\tau = -1$ results in a \textit{gain in convergence} of order 1.5. In addition, we also consider a first order hyperbolic system and prove that the \textit{gain in convergence} is 1.5 {\color{black} when the penalty parameter and the boundary condition satisfy a specific relation, while all other stable choices give a \textit{gain in convergence} of one order}.

\section{Accuracy analysis for the linear schemes} \label{sec_accuracy}
We derive error estimates for the linear schemes for both the scalar advection equation and a first order hyperbolic system. We assume the initial and boundary data are compatible and sufficiently smooth. This compatibility condition allows a decomposition of equation \eqref{eqn_adv} in a bounded domain to two half-line problems, in $x\in (-\infty, 1]$ 
and $x\in [0, \infty)$, and the Cauchy problem in $x\in (-\infty, \infty)$ \cite{Gustafsson2013}. We discretize the equation and prove stability in the bounded domain, but analyze the accuracy of discretization for the three decomposed problems.     

\subsection{Error estimate for the linear scheme for the advection equation}\label{sec_err_adv}

Let $\m{v}$ be the exact solution of \eqref{eqn_adv} evaluated on the grid. Then $\m{v}$ satisfies
\begin{equation}\label{exac_adv}
\m{v}_t + D_m \m{v} = \m{d},
\end{equation}
where $\m{d}$ is the truncation error on the grid $\m{x}$. The truncation error has the following structure 
\[
 \m{d} = [\underbrace{\mathcal{O}(h^b),\cdots, \mathcal{O}(h^b)}_{r}, \mathcal{O}(h^p), \cdots\cdots\cdots, \mathcal{O}(h^p),\underbrace{\mathcal{O}(h^b), \cdots,\mathcal{O}(h^b)}_{r}]^\tra,
\]
where $b<p$. 
In the analysis, we decompose the truncation error into three parts $\m{d}:=\m{d^{(\bm{\ve})}}+\m{d^{(I)}}+\m{d^{(\bm{\xi})}}$.  The first $r$ elements of the left boundary truncation error $\m{d^{(\bm{\ve})}}$ are  $\mathcal{O}(h^b)$ and the remaining elements in  $\m{d^{(\bm{\ve})}}$ are all zeros. Similarly in the right boundary truncation error $\m{d^{(\bm{\xi})}}$, the last $r$ elements $\mathcal{O}(h^b)$ are the only nonzero elements. In $\m{d^{(I)}}$, the first and last $r$ elements are zero but the elements in the interior are  $\mathcal{O}(h^p)$. 
It is important to note that $r$ does not depend on $h$, and the number of grid points with truncation error $\mathcal{O}(h^p)$ is proportational to $h^{-1}$.

Let $\bm{\zeta}:=\m{v}-\m{u}$ be the pointwise error. The error equation reads
\begin{equation}\label{err1_adv}
\bm{\zeta}_t + D_m \bm{\zeta} = -\tau H^{-1}\m{e_0} (\zeta_1)+\m{d}.
\end{equation}
The truncation error $\m{d}$ is a forcing term. Applying the energy analysis to the error equation \eqref{err1_adv}, it is straightforward to obtain the estimate $\|\bm{\zeta}\|\leq Ch^{\min(p,b+1/2)}$. The half an order gain from the boundary truncation error is due to the fact that $r$ does not depend on $h$.

Since the scheme is linear, we also decompose the pointwise error $\bm{\zeta}:=\bm{\ve}+\bm{\zeta}^{\m{(I)}}+\bm{\xi}$, where the three components $\bm{\ve}$, $\bm{\zeta}^{\m{(I)}}$ and $\bm{\xi}$ correspond to the truncation error $\m{d^{(\bm{\ve})}}$, $\m{d^{(I)}}$ and $\m{d^{(\bm{\xi})}}$, respectively. In the following, we analyze the three components of the pointwise error, and then combine them to obtain an error estimate for $\bm{\zeta}$.

An estimate for the interior pointwise error $\bm{\zeta}^{\m{(I)}}$ can be established by the energy analysis. Following the steps in the proof of Lemma \ref{lemma_sta}, we have 
\[
\f{d}{dt}\|\bm{\zeta}^{\m{(I)}}\|_H^2 \leq  2 (\bm{\zeta}^{\m{(I)}})^\tra H \m{d^{(I)}} \leq 2 \|\bm{\zeta}^{\m{(I)}}\|_H \|\m{d^{(I)}}\|_H.
\]
Consequently,
\[
\f{d}{dt}\|\bm{\zeta}^{\m{(I)}}\|_H  \leq  \|\m{d^{(I)}}\|_H.
\]
An integration in time gives
\[
\|\bm{\zeta}^{\m{(I)}}\|_H  \leq  Ch^p,
\]
where $C$ depends on the exact solution and the final time but not $h$. {\color{black}In the following, we use $C$ to denote a generic constant that may depend on the continuous problem but not $h$, and may take different values in different estimates. }

The above approach can be used to analyze the boundary pointwise error, leading to the estimates
\[
\|\bm{\ve}\|_H  \leq  Ch^{b+1/2},\quad \|\bm{\xi}\|_H \leq  Ch^{b+1/2}.
\]
However, these results are suboptimal, and sharper error estimates can be obtained by the normal mode analysis based on Laplace transform. 

Now consider the error equation for $\bm{\ve}$, 
\begin{equation}\label{err2_adv}
\bm{\ve}_t + D_m \bm{\ve} = -\tau H^{-1}\m{e_0} (\ve_1)+\m{d^{(\bm{\ve})}}.
\end{equation}
After Laplace transforming \eqref{err2_adv} in time and multiplying with $h$, we obtain
\begin{equation}\label{err3_adv}
\wt{s}\wh{\bm{\ve}} + hD_m \wh{\bm{\ve}} = -\tau hH^{-1}\m{e_0} (\wh{\ve_1})+h\wh{\m{d^{(\bm{\ve})}}},
\end{equation}
where $\wt s:=sh$ with $s$ the dual of time, and $\wh{\bm{\ve}}$ is the Laplace-transformed variable of $\bm{\ve}$. The central idea for  sharp error estimates is first deriving an error bound for $\bm{\ve}$ in the Laplace space, and then use Parseval's relation to obtain the corresponding estimate in the physical space. To this end, we shall solve the difference equation \eqref{err3_adv} in the following two steps. 

In the interior, the upwind SBP operator $D_m$ uses a repeated finite difference stencil, and the components on the right-hand side of \eqref{err3_adv} equal to zero. More precisely, we have 
\begin{equation}\label{err4_adv}
\wt s\wh{\ve}_j + h\sum_{i=j-k_1}^{j+k_2}(D_m)_{ji}\wh{\ve}_i = 0,\quad j=r+1, r+2, \cdots,
\end{equation}
where $k_1=(p-1)/2$, $k_2=(p+1)/2$ for odd $p$, and $k_1=(p-2)/2$, $k_2=(p+2)/2$ for even $p$. The solution to \eqref{err4_adv} can be obtained by solving the corresponding characteristic equation. 

In the first $r$ equations in \eqref{err3_adv}, the SBP operator $D_m$ has boundary modified stencils and the truncation error has nonzero components. In this case, a system of $r$ linear equations shall be solved and its solution depends on $\wt s$. It is important to analyze the behavior of the solution in the limit $\wt s\rightarrow 0$, because the convergence rate corresponds to the asymptotic regime when $h\rightarrow 0$. 

The stencils in $D_m$ and the SAT for the inflow boundary condition affect the solution to \eqref{err3_adv}. Therefore, to obtain error estimates with 
 $D_m$ of different order of accuracy, the analysis must be performed for each $D_m$. 
In \cite{Mattsson2017}, upwind SBP operators $D_m$ with interior order $p=2$ to 9 are constructed. We present below the accuracy property of the semidiscretization with $p=3$ and its proof.

\begin{theorem}\label{thm_adv_err}
Consider the stable semidiscretization \eqref{semi_adv} with a third order accurate interior stencil and first order accurate boundary closure in $D_m$. The convergence rate is 2.5 if the penalty parameter is $\tau=-1$, and 2 if $\tau\neq -1$.
\end{theorem}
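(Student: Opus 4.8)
The plan is to carry out the normal mode analysis outlined in the paragraphs preceding the theorem, specialized to the $p=3$ operator $D_m$. The essential object is the Laplace-transformed error equation \eqref{err3_adv}, and the goal is to bound $\wh{\bm{\ve}}$ in the Laplace space uniformly in $\wt s$ near $\wt s = 0$, then transfer the bound back to physical space by Parseval's relation. First I would solve the interior recursion \eqref{err4_adv} via its characteristic equation: substituting $\wh{\ve}_j = \kappa^j$ into the homogeneous difference equation yields a polynomial in $\kappa$ whose coefficients depend on $\wt s$ and on the interior stencil weights of the $p=3$ operator. I expect the admissible (decaying, i.e. $|\kappa|<1$) roots to collapse as $\wt s \to 0$ to roots of the limiting characteristic polynomial; tracking their asymptotic expansion in powers of $\wt s$ is what controls the final rate. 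The decaying solution is then a linear combination $\wh{\ve}_j = \sum_\ell \sigma_\ell \kappa_\ell^{\,j}$ of those roots, with coefficients $\sigma_\ell$ to be fixed by the boundary data.

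Next I would impose the first $r$ equations of \eqref{err3_adv}, in which the boundary-modified stencils of $D_m$ together with the SAT term $-\tau h H^{-1}\m{e_0}$ and the forcing $h\wh{\m{d^{(\bm{\ve})}}}$ all appear. This produces a small linear system whose unknowns are the coefficients $\sigma_\ell$. Writing this system as $E(\wt s)\,\bm{\sigma} = h\,\wh{\m{F}}$, where $E$ is the boundary determinant (characteristic matrix) and $\wh{\m{F}}$ encodes the transformed truncation error of size $\mathcal{O}(h^b)=\mathcal{O}(h)$, the error bound reduces to estimating $\|E(\wt s)^{-1}\|$ as $\wt s \to 0$. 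Stability (Lemma~\ref{lemma_sta}, guaranteeing the Kreiss condition holds for $\tau \le -1/2$) ensures that $\det E(\wt s)$ has no zeros in $\operatorname{Re}\wt s \ge 0$ away from the origin, so the whole delicacy is concentrated at $\wt s = 0$. The convergence rate is then read off from how $\wh{\bm{\ve}}$, and hence $\|\wh{\bm{\ve}}\|$, scales in $h$ after integrating the Laplace-space bound against $\wt s$ via Parseval.

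The crux, and the source of the two different rates, is the behavior of $\det E(\wt s)$ near the origin. I would expand $\det E(\wt s)$ in powers of $\wt s$ and examine its leading term. For generic $\tau \ne -1$ I expect $\det E(\wt s) = c_0 + c_1 \wt s + \cdots$ with $c_0 \ne 0$, so $E(\wt s)^{-1}$ stays bounded and the analysis yields the baseline gain of one order, giving rate $b + 1 = 2$. The special value $\tau = -1$ should make the constant term $c_0$ vanish (a cancellation tied to the SAT coefficient matching the upwind stencil exactly at $\wt s = 0$), so that $\det E(\wt s) \sim c_1 \wt s$; the resulting extra factor of $\wt s^{-1}=(sh)^{-1}$ in the Laplace-space bound converts, through Parseval and the half-order bookkeeping from the $h$-independence of $r$, into an additional half order, giving rate $2.5$. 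Verifying this precise order of vanishing of $\det E(\wt s)$ at $\tau=-1$ — and confirming it does not vanish to higher order, nor for other $\tau$ — is the main obstacle, since it requires the explicit boundary-closure weights of the $p=3$ operator and a careful low-frequency expansion rather than any soft argument.
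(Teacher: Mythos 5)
Your skeleton --- Laplace transforming \eqref{err3_adv}, extracting the admissible characteristic roots, closing the system with the boundary rows, invoking the Kreiss/determinant condition, and transferring back via Parseval --- is exactly the paper's route. But the crux you propose is wrong, and wrong in direction. The paper computes the boundary determinant at $\wt s=0$ explicitly: $\det(C_3)=3\tau(3+5\sqrt{33})/65$, which is \emph{nonzero for every stable} $\tau\leq -1/2$, including $\tau=-1$; no choice of penalty parameter makes your constant term $c_0$ vanish, and the determinant condition holds uniformly in $\tau$. Moreover, had $\det E(\wt s)\sim c_1\wt s$ as you posit, the solution of the boundary system would scale like $h^2/\wt s$, and on the Parseval line $\operatorname{Re}\wt s=\eta h$ this is $O(h/\eta)$: an extra factor $\wt s^{-1}$ \emph{enlarges} the Laplace-space bound and would lower the convergence rate, not add half an order. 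In normal mode analysis a zero of the boundary determinant at $\wt s=0$ is the signature of accuracy loss (a generalized eigenvalue), never of gain, so this step of your plan would fail if executed.

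The actual mechanism sits in the solution vector, not in the determinant. Solving $C_3\bm{\Sigma}=\m{d_3}$ at $\wt s=0$ gives \eqref{bs_sol}: $\s_1=-\tfrac{10(1+\tau)}{(3+5\sqrt{33})\tau}\,h^2\wh v_{xx}(x_1)$ and $\s_2=\tfrac{10}{3+5\sqrt{33}}\,h^2\wh v_{xx}(x_1)$. Both are $O(h^2)$ for any stable $\tau$; what distinguishes $\tau=-1$ is that the Cramer's-rule \emph{numerator} for $\s_1$ --- the coefficient multiplying the slowly decaying root $\kp_1=1-\wt s$ --- vanishes identically. The error is then carried solely by the fast root $\kp_2\approx 0.1861$, whose geometric sum obeys $h\sum_{j\geq 1}|\kp_2|^{2(j-1)}=O(h)$, yielding $\|\wh{\bm\ve}\|_h\leq Ch^{2.5}$. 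For $\tau\neq -1$ the slow mode survives, and the half order is lost precisely through the sharp bound $1/(1-|\kp_1|^2)\leq 1/(2\eta h)$ of Lemma \ref{lemma_kp}, since $\sqrt{h\cdot h^4\cdot (2\eta h)^{-1}}=O(h^2)$. In other words, the $h$-bookkeeping that decides between rates $2$ and $2.5$ lives in the mode-by-mode geometric sums, which mere boundedness of $E(\wt s)^{-1}$ cannot detect --- under your accounting both cases would produce the same rate. A complete proof also needs the outflow component $\bm\xi$ (last two rows, where only the fast root $\kp_3=(5-\sqrt{33})/2+\mathcal{O}(\wt s)$ is admissible, giving rate $2.5$ for all $\tau$) and the interior component via the energy method; these are routine by comparison, but your plan omits them.
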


\begin{proof}
We start by considering $\bm\ve$ corresponding to the inflow boundary condition at $x=0$. 
The interior error equation in {\color{black}Laplace} space reads
\[
\wt s \wh \ve_j = -\f{1}{6}\wh \ve_{j-2}+\wh \ve_{j-1}-\f{1}{2}\wh \ve_{j}-\f{1}{3}\wh \ve_{j+1}, \quad j = 3,4,\cdots.
\]
The corresponding characteristic equation is 
\[
-\f{1}{6}+\kp-\left(\f{1}{2}+\wt s\right)\kp^2-\f{1}{3}\kp^3=0\quad j = 3,4,\cdots.
\]
At $\wt s=0$, the characteristic equation has three solutions 
\[
1,\quad \frac{-5+\sqrt{33}}{4}\approx 0.1861,\quad \frac{-5-\sqrt{33}}{4}\approx -2.6861.
\]
By a perturbation analysis, the admissible solutions satisfying $|\kp|<1$ are 
\[
\kp_1=1-\wt s,\quad \kp_2=\frac{-5+\sqrt{33}}{4}+\mathcal{O}(\wt s)\approx 0.1861+\mathcal{O}(\wt s).
\]
We note that $\kp_1$ is a slowly decaying solution and $\kp_2$ is a fast decaying solution. The general solution to the interior error equation can be written as 
\begin{equation}\label{g_sol}
\wh\ve_j = \s_1\kp_1^{j-1}+\s_2\kp_2^{j-1},\quad j=1,2,\cdots,
\end{equation}
where the coefficients $\s_1$ and $\s_2$ will be determined by the error equation near the boundary. 

On the first two grid points, the error equation \eqref{err3_adv} with $j=1,2$ is 
\begin{align*}
\wt s \wh\ve_1 - \wh\ve_1+\wh\ve_2-\f{12}{5}\tau\wh \ve_1 &= \f{1}{2}h^2\wh v_{xx}(x_1), \\
\wt s \wh\ve_2 - \f{9}{13}\wh \ve_1+\f{5}{13}\wh\ve_2+\f{4}{13}\wh\ve_3&= -\f{5}{26}h^2\wh v_{xx}(x_1), 
\end{align*}
Using the general solution \eqref{g_sol}, we obtain the boundary system  $C_3\bm{\Sigma}=\m{d_3}$, where 
\[
C_3=\begin{bmatrix}
\wt s-1-\f{12}{5}\tau+\kp_1 & \wt s-1-\f{12}{5}\tau+\kp_2 \\
-\f{9}{13}+\wt s\kp_1+\f{5}{13}\kp_1+\f{4}{13}\kp_1^2 & -\f{9}{13}+\wt s\kp_2+\f{5}{13}\kp_2+\f{4}{13}\kp_2^2
\end{bmatrix},\]
\[
\bm{\Sigma}=\begin{bmatrix}
\s_1\\
\s_2
\end{bmatrix},\quad
\m{d_3}=\begin{bmatrix}
\f{1}{2}\\
-\f{5}{26}
\end{bmatrix}h^2\wh v_{xx}(x_1).
\]
At $\wt s=0$, we compute the determinant of the boundary system $\det(C_3)= 3\tau (3+5\sqrt{33})/65$, which is nonzero for all $\tau\leq -1/2$ required by stability. Thus, the determinant condition is satisfied. The solution to the boundary system is 
\begin{equation}\label{bs_sol}
\begin{bmatrix}
\s_1\\
\s_2
\end{bmatrix}=\begin{bmatrix}
-\f{10(1+\tau)}{(3+5\sqrt{33})\tau}\\
\f{10}{(3+5\sqrt{33})}
\end{bmatrix}h^2\wh v_{xx}(x_1).
\end{equation}
 The choice $\tau=-1$ gives $\s_1=0$. This is significant, because $\s_1$ is multiplied with the slowly decaying solution $\kp_1$ in \eqref{g_sol}. In this case, substituting \eqref{bs_sol} to \eqref{g_sol}, we obtain
 \begin{equation}\label{eps_err0}
 \|\bm{\wh\ve}\|_h = \sqrt{h\sum_{j=1}^\infty |\s_2\kp_2^{j-1}|^2} = \sqrt{h\s_2^2 \f{1}{1-|\kp_2^2|}} \leq Ch^{2.5} |\wh v_{xx}(x_1)|\quad \text{when } \tau=-1.
\end{equation}
Even though the truncation error at the inflow boundary condition is $\mathcal {O}(h)$, the corresponding error  converges to zero with rate 2.5 when $\tau=-1$. 

When $\tau\neq -1$, the convergence rate is lower. To see this, we compute
 \begin{align}
 \|\bm{\wh\ve}\|_h &= \sqrt{h\sum_{j=1}^\infty |\s_1\kp_1^{j-1}+\s_2\kp_2^{j-1}|^2} \leq \sqrt{h\sum_{j=1}^\infty |\s_1\kp_1^{j-1}|^2}+\sqrt{h\sum_{j=1}^\infty\s_2|\kp_2^{j-1}|^2}\label{eps_err1}\\
 &\leq \sqrt{h|\s_1|^2 \f{1}{1-|\kp_1^2|}}+\sqrt{h|\s_2|^2 \f{1}{1-|\kp_2^2|}}\label{eps_err2}
\end{align}
For a bound on the first term of \eqref{eps_err2}, we need to use Lemma 2 in \cite{Wang2017}. For completeness, we restate the lemma below. 
\begin{lemma}[Lemma 2 in \cite{Wang2017}] \label{lemma_kp}
Consider $Re(\wt s) := \eta h > 0$, where $\eta$ is a constant independent of $h$. Then $\kp_1=1-\wt s +\mathcal{O}({\wt s}^2)$ satisfies
\[
\f{1}{1-|\kp_1^2|}\leq\f{1}{2\eta h}
\]
to the leading order.
\end{lemma}
Combining \eqref{eps_err1}-\eqref{eps_err2} and Lemma \ref{lemma_kp}, we have 
\[
 \|\wh{\bm\ve}\|_h \leq C h^2 + C h^{2.5} \leq C h^2|\wh v_{xx}(x_1)|\quad  \text{when } \tau\neq -1.
 \]
Therefore, the error in  converges to zero with rate 2 when $\tau\neq -1$, which is half an order lower than when $\tau=-1$.

The above estimates in Laplace space can be transformed to the corresponding error estimates in physical space  by Parseval's relation. For the case with $\tau=-1$, we have 
\begin{align*}
\int_0^\infty e^{-2\eta t}\|\bm{\ve}\|_h^2 dt &= \f{1}{2\pi}\int_{-\infty}^ {\infty} \|\wh{\bm{\ve}}(\eta+iy)\|_h^2 dy \\
&\leq \f{ C h^5}{2\pi} \int_{-\infty}^ {\infty} |\wh v_{xx}(x_1,\eta+iy)|{\color{black}^2} dy \\
&= C h^5 \int_{0}^ {\infty} e^{-2\eta t} |v_{xx}(x_1,t)|{\color{black}^2} dt.
\end{align*}
By the standard argument of \textit{future cannot affect past} \cite[pp.~294]{Gustafsson2013}, the integration domain $[0,\infty)$ can be changed to any finite final time $t_f$, resulting 
\[ 
\|\bm{\ve}\|_{h,t}:=\sqrt{\int_0^{t_f}\|\bm{\ve}\|_h^2 dt} \leq C h^{2.5}\sqrt{  \int_{0}^ {t_f} e^{2\eta t_f} |v_{xx}(x_1,t)|{\color{black}^2} dt}.
\]
The estimate in physical space for the case with $\tau\neq -1$ can be obtained analogously. 

We now consider the outflow boundary $x=1$. Here, no boundary condition is specified and there is no corresponding  SAT in the semidiscretization. However, on the last two grid points the stencils in $D_m$ are also modified. The characteristic equation is 
\[
\f{1}{3}+\left(\f{1}{2}+\wt s\right)\kp-\kp^2+\f{1}{6}\kp^3=0,
\]
and the only admissible root satisfying $|\kappa|<1$ for $Re(\wt s)>0$ is 
\[
\kp_3 = \f{5-\sqrt{33}}{2}+\mathcal{O}(\wt s),
\]
which is a fast decaying component. Let the error $\bm{\xi}:=[\cdots, \xi_2, \xi_1]^\tra$, then the error equation on the last two grid points are 
\begin{align*}
  \wt s \wh\xi_2 + \f{2}{13}\wh\xi_4-\f{12}{13}\wh\xi_3+\f{5}{13}\wh\xi_2+\f{5}{13}\wh\xi_1 &= -\f{1}{26}h^2\wh v_{xx}(x_n), \\
  \wt s \wh\xi_1 + \f{2}{5}\wh\xi_3-\f{9}{5}\wh\xi_2+\f{7}{5}\wh\xi_1 &= \f{1}{10}h^2\wh v_{xx}(x_n),
\end{align*}
with two unknown variables $\wh\xi_1$ and $\wh\xi_2$. Note that $\xi_j = \xi_2\kp_3^{j-2}$ for $j=3,4,\cdots$. At $\wt s=0$, these two equations have a unique solution, i.e. the determinant condition is satisfied. Therefore, we have 
\[
 \|\wh{\bm\xi}\|_h = \sqrt{h|\wh\xi_1|^2+h\sum_{j=2}^\infty |\wh\xi_j|^2}=\sqrt{h|\wh\xi_1|^2+h|\wh\xi_2|^2\f{1}{1-|\kp_3|^2}}\leq Ch^{2.5}|\wh v_{xx}(x_n)|.
 \]
The error $\|\wh{\bm\xi}\|_h$ converges to zero with rate 2.5, even though the truncation error on the last two grid points is $\mathcal{O}(h)$. By using Parseval's relation, we have 
\[
 \|{\bm\xi}\|_{h,t} \leq Ch^{2.5}\sqrt{   \int_{0}^ {t_f} e^{2\eta t_f} |v_{xx}(x_n,t)|{\color{black}^2} dt}.
\]
Combining the estimates for the two boundary truncation error $\bm{\ve}$ and $\bm\xi$, the error estimate for $\bm\zeta$ follows. This concludes the proof.
\end{proof}

When using a finite difference method to solve an initial boundary value problem, it is common that the truncation error is larger near boundaries than in the interior. The general results in \cite{Gustafsson1975,Gustafsson1981} state that for first order hyperbolic problems, the convergence rate can be one order higher than the boundary truncation error, i.e. there is a gain in convergence of one order. In Theorem \ref{thm_adv_err}, we have proved that for most choices $\tau\leq -1/2$, $\tau\neq -1$, the gain in convergence is indeed one order, resulting in an overall convergence rate of order 2. With the particular choice $\tau=-1$, a higher convergence rate of order 2.5 is obtained, corresponding to a gain in convergence of order 1.5.  Hence, the penalty parameter $\tau=-1$ shall be used in practical computation. 

\subsection{Error estimate for the linear scheme for a first order hyperbolic system}\label{sec_accuracy_system}
Consider the first order hyperbolic system 
\begin{equation}\label{hs}
\m{U}_t + A\m{U}_x = 0,\quad x\in [0,1],\ t\in [0,1],
\end{equation}
where the $2\times 2$ coefficient matrix is 
\begin{equation}\label{A}
A = \begin{bmatrix}
0 & 1\\
1 & 0
\end{bmatrix}, 
\end{equation}
and the unknown $\m{U}(x,t):=[U(x,t), V(x,t)]^\tra$ has two components. There are several boundary conditions that lead to a wellposed problem. Here, we consider 
\begin{equation}\label{hs_bc}
{\color{black}U(0,t)+\alpha_0 V(0,t)=g_1(t),\quad U(1,t)+\alpha_1 V(1,t)=g_n(t).}
\end{equation}

{\color{black}
\begin{theorem}
  The first order hyperbolic system \eqref{hs}-\eqref{A} with boundary condition \eqref{hs_bc} satisfies an energy estimate if $\alpha_0\geq 0$ and $\alpha_1\leq 0$. 
\end{theorem}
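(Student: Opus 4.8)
The plan is to apply the continuous energy method directly to \eqref{hs}. First I would introduce the energy $E(t):=\int_0^1 \m{U}^\tra\m{U}\,dx=\int_0^1(U^2+V^2)\,dx$ and differentiate it in time. Substituting $\m{U}_t=-A\m{U}_x$ from \eqref{hs} gives
\[
\f{dE}{dt}=2\int_0^1 \m{U}^\tra\m{U}_t\,dx=-2\int_0^1 \m{U}^\tra A\m{U}_x\,dx.
\]
Because $A$ in \eqref{A} is symmetric, the integrand is an exact derivative, $\m{U}^\tra A\m{U}_x=\f{1}{2}(\m{U}^\tra A\m{U})_x$, so the volume integral collapses to boundary values by the fundamental theorem of calculus:
\[
\f{dE}{dt}=-\big[\m{U}^\tra A\m{U}\big]_{x=0}^{x=1}.
\]

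The next step is to evaluate this boundary term using the explicit form of $A$. A direct computation gives $\m{U}^\tra A\m{U}=2UV$, so that
\[
\f{dE}{dt}=2U(0,t)V(0,t)-2U(1,t)V(1,t).
\]
I would then impose the homogeneous version of the boundary conditions \eqref{hs_bc}, i.e. $g_1\equiv g_n\equiv 0$, which is the relevant case for an energy estimate. At $x=0$ this reads $U(0,t)=-\alpha_0 V(0,t)$, and at $x=1$ it reads $U(1,t)=-\alpha_1 V(1,t)$. Substituting both relations into the boundary term yields
\[
\f{dE}{dt}=2\alpha_1 V(1,t)^2-2\alpha_0 V(0,t)^2,
\]
which is nonpositive precisely when $\alpha_0\geq 0$ and $\alpha_1\leq 0$. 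Integrating in time then gives $E(t)\leq E(0)$, the claimed energy estimate.

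The computation is short and I do not anticipate a serious obstacle; the one point requiring care is the sign bookkeeping when substituting each scalar boundary relation into the quadratic boundary term $2UV$, since it is exactly this substitution that selects the admissible signs of $\alpha_0$ and $\alpha_1$. It is worth noting that the symmetry of $A$ is what makes the plain $L^2$ energy the natural choice, so no symmetrizing transformation is needed; diagonalizing $A=R\Lambda R^\tra$ with eigenvalues $\pm 1$ is useful only as a consistency check that exactly one boundary condition per endpoint, as prescribed in \eqref{hs_bc}, produces a wellposed and dissipative problem.
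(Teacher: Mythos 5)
Your proposal is correct and follows essentially the same route as the paper's proof: multiply \eqref{hs} by $\m{U}^\tra$, use the symmetry of $A$ to reduce $\frac{d}{dt}\int_0^1 (U^2+V^2)\,dx$ to the boundary term $-2UV|_0^1$, then substitute the homogeneous boundary conditions to obtain $-2\alpha_0 V^2(0,t)+2\alpha_1 V^2(1,t)\leq 0$ under the stated sign conditions. Your write-up merely makes explicit the intermediate steps (the exact-derivative identity and the sign bookkeeping) that the paper compresses into one line.
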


\begin{proof}
    We multiply \eqref{hs} by $\m{U}^\tra$, and obtain 
    \begin{align*}
        \frac{d}{dt}\int_0^1 U^2+V^2dx = -2UV|_0^1 = -2\alpha_0 V^2(0,t) + 2\alpha_1 V^2(1,t)
    \end{align*}
with homogeneous boundary condition $g_1=g_n=0$. Therefore, we have the energy estimate $\frac{d}{dt}\int_0^1 U^2+V^2dx \leq 0$ when $\alpha_0\geq 0$ and $\alpha_1\leq 0$. 
\end{proof}
}

We discretize the equation in space on a uniform grid $\m{x}$ defined in \eqref{1d_grid}. Let $\m\w:=[\m{u},\m{v}]^\tra$ be the finite difference solution such that $u_j\approx U(x_j,t)$ and $v_j\approx V(x_j,t)$. To derive a stable finite difference scheme, we perform a flux splitting 
\begin{equation*}
A := A_m+A_p := \frac{1}{2}\begin{bmatrix}
1 & 1\\
1 & 1
\end{bmatrix}+\frac{1}{2}\begin{bmatrix}
-1 & 1\\
1 & -1
\end{bmatrix}, 
\end{equation*}
where $A_m$ corresponds to the wave going from left to right, and $A_p$ corresponds to the wave going from right to left. We write the semidiscretization as 
{\color{black}
\begin{align}\label{semi_hs}
&\m\w_t + (A_p \otimes D_p) \m\w + (A_m \otimes D_m)\m\w \\
=&\begin{bmatrix}
\tau_1 H^{-1} \el (u_1+\alpha_0 v_1-g_1) \\
\tau_2 H^{-1} \el (u_1+\alpha_0 v_1-g_1)
\end{bmatrix}+\begin{bmatrix}
\tau_3 H^{-1} \er (u_n+\alpha_1 v_n-g_n) \\
\tau_4 H^{-1} \er (u_n+\alpha_1 v_n-g_n)
\end{bmatrix}.\notag
\end{align}}
The last two terms on the left-hand side of \eqref{semi_hs} approximate $A\m{U}_x$ using the pair of upwind SBP operators, while the right-hand side imposes weakly the Dirichlet boundary conditions. The four penalty parameters $\tau_1$, $\tau_2$, $\tau_3$, $\tau_4$ are determined by the following stability analysis.

\begin{lemma}
{\color{black}The semidiscretization \eqref{semi_hs} satisfies the energy estimate $\frac{d}{dt}\|\m\w\|_H\leq 0$ 
 if 
 \begin{equation}\label{StabilityCondition}
 (\alpha_0\tau_1-\tau_2-1)^2+4\alpha_0\tau_1\leq 0,\quad (\alpha_1\tau_3-\tau_4+1)^2-4\alpha_1\tau_3\leq 0.
 \end{equation}}
\end{lemma}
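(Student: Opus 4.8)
The plan is to mimic the energy analysis used in Lemma~\ref{lemma_sta}, but now for the coupled system. First I would multiply the semidiscretization \eqref{semi_hs} from the left by $\m\w^\tra (I_2\otimes H)$, where $I_2$ is the $2\times 2$ identity, and set the boundary data $g_1=g_n=0$. The left-hand side then produces the term $\m\w^\tra (A_p\otimes HD_p + A_m\otimes HD_m)\m\w$, and the right-hand side produces penalty contributions localized at the first and last grid points. Adding the resulting equation to its transpose gives
\[
\f{d}{dt}\|\m\w\|_H^2 = -\m\w^\tra\left(A_p\otimes(HD_p+(HD_p)^\tra) + A_m\otimes(HD_m+(HD_m)^\tra)\right)\m\w + (\text{boundary terms}).
\]
Using $HD_m=Q_m+B/2$, $HD_p=Q_p+B/2$ together with $Q_m=-Q_p^\tra$ from Definition~\ref{def_upwind}, the combination $HD_p+(HD_p)^\tra$ splits into the symmetric part $Q_p+Q_p^\tra$ plus the boundary matrix $B$, and similarly for the $m$-operator. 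Since $A_p$ and $A_m$ are symmetric, the key observation is that the skew-symmetric pieces $Q_p-Q_m^\tra$ cancel, leaving a symmetric positive-semidefinite interior dissipation term (because $Q_m+Q_m^\tra\succeq 0$ and the eigenvalues of $A_m$, $A_p$ are nonnegative after the flux splitting) which can only help the estimate.

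Next I would isolate the boundary contributions. The matrix $B=\text{diag}([-1,0,\dots,0,1])$ contributes $-$(left boundary) $+$(right boundary) quadratic forms in $(u_1,v_1)$ and $(u_n,v_n)$, while the SAT penalty terms, after adding the transpose, contribute terms of the form $2\tau_1 u_1(u_1+\alpha_0 v_1)$, $2\tau_2 v_1(u_1+\alpha_0 v_1)$ at the left boundary and analogously with $\tau_3,\tau_4,\alpha_1$ at the right boundary. Collecting everything, the left boundary contributes a quadratic form in $[u_1,v_1]$ whose matrix, call it $M_0$, has entries built from $A_m$, $A_p$ evaluated on $\el$ (giving the $-UV$ type coupling from $B$) plus the penalty entries; similarly the right boundary gives a matrix $M_n$ in $[u_n,v_n]$. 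The energy estimate $\f{d}{dt}\|\m\w\|_H^2\le 0$ holds precisely when $M_0\preceq 0$ and $M_n\preceq 0$.

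The main obstacle, and the heart of the proof, is translating the semidefiniteness of these $2\times 2$ boundary matrices into the stated scalar conditions \eqref{StabilityCondition}. For a symmetric $2\times 2$ matrix, negative semidefiniteness is equivalent to a nonpositive trace together with a nonnegative determinant; I would compute $M_0$ explicitly in terms of $\tau_1,\tau_2,\alpha_0$ and show that its determinant condition reduces exactly to $(\alpha_0\tau_1-\tau_2-1)^2+4\alpha_0\tau_1\le 0$, and similarly that $\det M_n$ yields $(\alpha_1\tau_3-\tau_4+1)^2-4\alpha_1\tau_3\le 0$. The delicate bookkeeping is getting the signs right: the $-1$ and $+1$ in $B$ enter the two boundaries with opposite signs, which is why the left condition has $+1$ inside the square and $+4\alpha_0\tau_1$ while the right condition has $+1$ inside its own square but $-4\alpha_1\tau_3$, consistent with the sign conventions $\alpha_0\ge 0$, $\alpha_1\le 0$ that made the continuous problem wellposed. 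Once the determinant inequalities are shown to coincide with \eqref{StabilityCondition}, and one checks that these inequalities force the trace to be nonpositive as well (a completing-the-square argument), the proof concludes, and I would remark that these are the discrete analogues of the boundary dissipation $-2\alpha_0 V^2(0,t)+2\alpha_1 V^2(1,t)\le 0$ from the continuous energy estimate.
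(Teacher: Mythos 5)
Your proposal follows essentially the same route as the paper: multiply \eqref{semi_hs} by $\m\w^\tra(I\otimes H)$ with homogeneous data, symmetrize, dispose of the interior term by the upwind SBP property, and reduce the question to the nonpositivity of two quadratic forms in $(u_1,v_1)$ and $(u_n,v_n)$. The only methodological difference is minor: the paper checks $BT_0=2u_1v_1+2\tau_1 u_1(u_1+\alpha_0 v_1)+2\tau_2 v_1(u_1+\alpha_0 v_1)\leq 0$ by completing the square in $u_1$, while you invoke the trace/determinant criterion for the symmetric $2\times 2$ matrix. Your determinant route does work: with $a=\alpha_0\tau_1$, $b=\tau_2$ the condition $\det M_0\geq 0$ reads $(1+a+b)^2-4ab\leq 0$, and both this and the paper's $(a-b-1)^2+4a\leq 0$ expand to $(a-b)^2+2a+2b+1\leq 0$, so they coincide exactly; the analogous identity holds at $x=1$ with $a=\alpha_1\tau_3$, $b=\tau_4$.

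Two concrete slips, however. First, the eigenvalues of $A_p$ are $0$ and $-1$, not nonnegative, and $Q_p-Q_m^\tra=2Q_p$ does not vanish (what cancels under symmetrization are the skew parts of $Q_m$, $Q_p$). The interior term is still dissipative, but for a different reason: $Q_p+Q_p^\tra=-(Q_m+Q_m^\tra)\preceq 0$, so $A_p\otimes(Q_p+Q_p^\tra)$ is a Kronecker product of two negative semidefinite matrices, hence positive semidefinite; more simply, since $A_m-A_p=I$, the two interior contributions collapse to $I\otimes(Q_m+Q_m^\tra)$, which is what the paper's proof uses. Second, your claim that the determinant inequalities force the traces to be nonpositive fails in the degenerate case $\alpha_0=0$: there \eqref{StabilityCondition} forces $\tau_2=-1$ but leaves $\tau_1$ free, while $BT_0=2\tau_1 u_1^2$ additionally requires $\tau_1\leq 0$ (similarly $\tau_3\leq 0$ when $\alpha_1=0$). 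When $\alpha_0>0$ the condition does imply $\alpha_0\tau_1\leq 0$ and $\tau_2\leq 0$, so your completing-the-trace argument goes through; the paper sidesteps the degenerate case by explicitly imposing $\tau_1<0$, or $\tau_1=0$ with $\tau_2=-1$, in its case analysis. With these two repairs your proof is complete and equivalent to the paper's.
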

{\color{black}
\begin{proof}
The proof by the energy analysis follows the same principle as the proof for Lemma \ref{lemma_sta}, see also Lemma 3.1 in \cite{Mattsson2017}. More precisely, for homogeneous boundary data the energy change rate is 
\begin{align*}
\frac{d}{dt}\|\m\w\|_H^2 \leq &
\m\w^\tra (I \otimes (Q_p+Q_p^\tra))\m\w \\
&+2u_1v_1 + 2\tau_1(u_1+\alpha_0 v_1) + 2\tau_2 v_1(u_1+\alpha_0 v_1) \\
&-2u_nv_n+2\tau_3 u_n(u_n+\alpha_1 v_n) + 2\tau_4 v_n(u_n+\alpha_1 v_n).
\end{align*}
On the right-hand side, the first term is nonpositive because of the SBP property. The second line corresponds to the boundary terms at $x=0$ and the third line corresponds to the boundary terms at $x=1$, and they can be analyzed separately.  

Consider the boundary terms at $x=0$, i.e., $BT_0:=2u_1v_1 + 2\tau_1 u_1 (u_1+\alpha_0 v_1) + 2\tau_2 v_1(u_1+\alpha_0 v_1)$. If $\tau_1=0$, then we must choose $\tau_2=-1$ so that $BT_0=-2\alpha_0 v_1^2\leq 0$. If $\tau_1\neq 0$, then we have 
\begin{align*}
BT_0=2\tau_1 \left[\right.  \left(u_1+\right.\left(\frac{\alpha_0}{2}+\frac{\tau_2+1}{2\tau_1}\right)v_1\left.\right)^2 - \left(\frac{\alpha_0}{2}+\frac{\tau_2+1}{2\tau_1}\right)^2v_1^2+\frac{\tau_2\alpha_0}{\tau_1}v_1^2      \left.\right].
\end{align*}
We can guarantee $BT_0\leq 0$ if $\tau_1<0$ and
\[
- \left(\frac{\alpha_0}{2}+\frac{\tau_2+1}{2\tau_1}\right)^2+\frac{\tau_2\alpha_0}{\tau_1}\geq 0 \Rightarrow (\alpha_0\tau_1-\tau_2-1)^2+4\alpha_0\tau_1\leq 0.
\]
The boundary terms at $x=1$ can be analyzed in a similar way, yielding the stability condition $(\alpha_1\tau_3-\tau_4+1)^2-4\alpha_1\tau_3\leq 0$.  
\end{proof}
}

To derive error estimates, we follow the main steps of the accuracy analysis for the advection equation by the normal mode analysis. First, we need to use the change of variables to the error equation of the hyperbolic system. We then split the error component into an interior part and a boundary part. The interior part can be estimated by the energy method in the same way as for the advection equation in Sec.~\ref{sec_err_adv}. It is the error due to the boundary closure that dominates the overall convergence rate. We have the following theorem for the error estimate.

\begin{theorem}
{\color{black}
Consider the stable semidiscretization \eqref{semi_hs} with a third order accurate interior stencil and first order accurate boundary closure in the pair $D_m$ and $D_p$. The convergence rate is 2.5 if 
\begin{equation}\label{AlphaTau}
\alpha_0 \tau_1 +\tau_2+1=0,\quad \alpha_1 \tau_3 +\tau_4-1=0.     
\end{equation}
Otherwise, the convergence rate is 2.}
\end{theorem}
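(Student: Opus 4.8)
The plan is to lift the normal mode analysis of Theorem~\ref{thm_adv_err} from the scalar equation to the system \eqref{semi_hs}. The decisive simplification is that the coefficient matrix \eqref{A} is diagonalized by the symmetric orthogonal matrix $S=\f{1}{\sqrt2}\left[\begin{smallmatrix}1&1\\1&-1\end{smallmatrix}\right]$, giving $A=S\Lambda S^\tra$ with $\Lambda=\mathrm{diag}(1,-1)$, and that the flux splitting is exactly $A_m=S\,\mathrm{diag}(1,0)\,S^\tra$ and $A_p=S\,\mathrm{diag}(0,-1)\,S^\tra$. Hence, passing to the characteristic combinations $\m w_1:=(\m u+\m v)/\sqrt2$ and $\m w_2:=(\m u-\m v)/\sqrt2$ and applying the same change of variables to the error, the interior error equations decouple: $\m w_1$ obeys the scalar difference equation driven by $D_m$ that was analyzed in Theorem~\ref{thm_adv_err}, while $\m w_2$ obeys its left--right mirror image driven by $D_p$. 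The interior error is controlled at $\mathcal O(h^p)$ by the energy method exactly as in Sec.~\ref{sec_err_adv} and does not limit the rate, so the analysis concentrates on the boundary closures. First I would Laplace transform the error equation in time, multiply by $h$, and pass to the transformed characteristic errors $\wh{\m w}_1,\wh{\m w}_2$, so that in the interior each is governed by one of the cubic characteristic equations already solved in Theorem~\ref{thm_adv_err}.

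I would then catalogue the admissible modes at each boundary. Near $x=0$ the incoming characteristic $\wh{\m w}_1$ supplies the slowly decaying mode $\kp_1=1-\wt s$ together with the fast mode $\kp_2\approx0.1861$, whereas the outgoing characteristic $\wh{\m w}_2$ supplies only fast modes, its smooth mode $1+\wt s$ being non-admissible there. Writing $\wh{\m u},\wh{\m v}$ through $S$ as combinations of these modes, the interior solution near $x=0$ carries a coefficient $\s_1$ in front of the slowly decaying $\kp_1$ together with coefficients of the fast modes. Exactly as in the scalar case, $\s_1$ is the only coefficient whose modal norm is not automatically $\mathcal O(h^{2.5})$: by Lemma~\ref{lemma_kp} the contribution of $\kp_1$ is bounded only by $\mathcal O(h^2)$, while each fast mode contributes $\mathcal O(h^{2.5})$ after summing the geometric series as in \eqref{eps_err0}.

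The heart of the proof is the boundary system. I would substitute the modal representation into the $r$ boundary-closure rows of \eqref{semi_hs}, append the SAT penalty terms carrying $\tau_1,\tau_2$, evaluate at $\wt s=0$, and obtain a square linear system $C\bm\Sigma=\m d$ for the modal coefficients, with right-hand side proportional to the boundary truncation error $h^2\wh v_{xx}(x_1)$. After checking that $\det C\neq0$ for every stable choice in \eqref{StabilityCondition} (the determinant condition), I would solve for $\s_1$ and show by direct computation that its numerator factors so as to vanish precisely when $\alpha_0\tau_1+\tau_2+1=0$. This is the system analogue of $\s_1=0\Leftrightarrow\tau=-1$ in \eqref{bs_sol}; the condition couples the boundary coefficient $\alpha_0$ of \eqref{hs_bc} with the two penalties because the SAT feeds the boundary data into both components simultaneously. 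Repeating the argument at $x=1$, where the roles of $\wh{\m w}_1$ and $\wh{\m w}_2$ are interchanged by reflection, produces the mirrored vanishing condition $\alpha_1\tau_3+\tau_4-1=0$.

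Finally I would assemble the estimate. When both relations in \eqref{AlphaTau} hold the slow-mode coefficients vanish at both boundaries, only fast modes survive, and summation of the geometric series gives $\mathcal O(h^{2.5})$ bounds in Laplace space; Parseval's relation together with the \emph{future cannot affect past} argument, exactly as at the end of Theorem~\ref{thm_adv_err}, transfers these to the physical-space rate $2.5$. If either relation fails, the corresponding $\s_1\neq0$, Lemma~\ref{lemma_kp} yields only $\mathcal O(h^2)$, and the overall rate drops to $2$. I expect the main obstacle to be precisely this boundary-system algebra: unlike the scalar $2\times2$ matrix $C_3$, the flux splitting and the pair of penalties couple $\wh{\m w}_1$ and $\wh{\m w}_2$ at the boundary, so $C$ mixes modes from both characteristic families and one must track the factorization carefully to expose the clean conditions \eqref{AlphaTau}.
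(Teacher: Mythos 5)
Your proposal is correct and follows essentially the same route as the paper: the paper's addition/subtraction of the two interior error equations is exactly your characteristic decomposition, yielding the same slow mode $1-\wt s$ and fast modes ($\approx 0.1861$ for the incoming family and $\approx -0.3723$ for the outgoing one), a boundary system evaluated at $\wt s=0$ whose slow-mode coefficient factors as $-\frac{(25\sqrt{33}-15)(\alpha_0\tau_1+\tau_2+1)}{204(\tau_1+\tau_2)(\alpha_0+1)}$ and hence vanishes precisely under \eqref{AlphaTau}, with Parseval's relation transferring the Laplace-space bounds to the physical rates $2.5$ and $2$. One bookkeeping point you should anticipate: because the outgoing characteristic's modal representation $\s_0\thet^{j-2}$ is valid only for $j\geq 2$, the first-point value $\wh{\eu}_1$ enters as a fourth unknown alongside the three modal coefficients, so the boundary closure gives the paper's $4\times 4$ system \eqref{BoundarySystem} rather than a square system in the modal coefficients alone.
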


\begin{proof}

We start by considering the error component $\bm{\ve}:=[\bm\eu,\bm\ev]^\tra$ that is due to the truncation error near the left boundary $x=0$ when using the upwind operators with third order accurate interior stencil.

 In the interior, we have the error equation in Laplace space
\begin{align}
\wt s \wh{\eu}_j &= -\frac{1}{12}\wh{\eu}_{j-2}+\frac{1}{3}\wh{\eu}_{j-1}-\frac{1}{2}\wh{\eu}_{j}+\frac{1}{3}\wh{\eu}_{j+1}-\frac{1}{12}\wh{\eu}_{j+2}-\frac{1}{12}\wh{\ev}_{j-2}+\frac{2}{3}\wh{\ev}_{j-1}-\frac{2}{3}\wh{\ev}_{j+1}+\frac{1}{12}\wh{\ev}_{j+2},\label{es1}\\
\wt s \wh{\ev}_j &= -\frac{1}{12}\wh{\eu}_{j-2}+\frac{2}{3}\wh{\eu}_{j-1}-\frac{2}{3}\wh{\eu}_{j+1}+\frac{1}{12}\wh{\eu}_{j+2} -\frac{1}{12}\wh{\ev}_{j-2}+\frac{1}{3}\wh{\ev}_{j-1}-\frac{1}{2}\wh{\ev}_{j}+\frac{1}{3}\wh{\ev}_{j+1}-\frac{1}{12}\wh{\ev}_{j+2}.\label{es2} 
\end{align}
Next, we compute the addition \eqref{es1}+\eqref{es2} and subtraction \eqref{es1}-\eqref{es2}, 
\begin{align}
\wt s (\wh{\eu}_j+\wh{\ev}_j) &= -\frac{1}{6}(\wh{\eu}_{j-2}+\wh{\ev}_{j-2})+(\wh{\eu}_{j-1}+\wh{\ev}_{j-1})-\frac{1}{2}(\wh{\eu}_{j}+\wh{\ev}_{j})-\frac{1}{3}(\wh{\eu}_{j+1}+\wh{\ev}_{j+1}), \label{es3}\\
\wt s (\wh{\eu}_j-\wh{\ev}_j) &= -\frac{1}{3}(\wh{\eu}_{j-1}-\wh{\ev}_{j-1})-\frac{1}{2}(\wh{\eu}_{j}-\wh{\ev}_{j})+(\wh{\eu}_{j+1}-\wh{\ev}_{j+1})-\frac{1}{6}(\wh{\eu}_{j+2}-\wh{\ev}_{j+2}). \label{es4}
\end{align}
which can be considered as relations for $\bm\eu+\bm\ev$ and $\bm\eu-\bm\ev$, respectively. The characteristic equation corresponding to \eqref{es3} is 
\begin{equation*}
\wt s \kp^2 =  -\frac{1}{6} + \kp - \frac{1}{2}\kp^2 -\frac{1}{3}\kp^3,
\end{equation*}
which has two admissible roots $\kp_1 = 0.1861+\mathcal{O}(\wt s)$ and $\kp_2 = 1-\wt s+\mathcal{O}(\wt s^2)$. Similarly, the characteristic equation corresponding to \eqref{es4} is 
\begin{equation*}
\wt s \thet = -\frac{1}{3}-\frac{1}{2}\thet + \thet^2 -\frac{1}{6}\thet^3,
\end{equation*}
and has one admissible root $\thet = -0.3723+\mathcal{O}(\wt s)$. The general solutions to the error equations \eqref{es3} and \eqref{es4} are
\begin{align*}
\wh{\eu}_{j}+\wh{\ev}_{j} &= \sigma_1 \kp_1^{j-1}+\sigma_2 \kp_2^{j-1},\quad j = 1, 2,\cdots,\\
\wh{\eu}_{j}-\wh{\ev}_{j} &= \sigma_0 \thet^{j-2},\quad j = 2,3,\cdots.
\end{align*}
We then have
\begin{align}
\wh{\eu}_{j} &=\frac{1}{2} (\sigma_0 \thet^{j-2}+\sigma_1 \kp_1^{j-1}+{\color{black}\sigma_2} \kp_2^{j-1}),\quad j = 2,3,\cdots, \label{es5}\\
\wh{\ev}_{j} &= \frac{1}{2} (-\sigma_0 \thet^{j-2}+\sigma_1 \kp_1^{j-1}+\sigma_2 \kp_2^{j-1}),\quad j = 2,3,\cdots, \label{es6}\\
\wh{\ev}_{1}  &= -\wh{\eu}_{1}+ \sigma_1 +\sigma_2. \label{es7}
\end{align}
The four unknowns $\sigma_0$, $\sigma_1$, $\sigma_2$, and $\wh{\eu}_{1}$ shall be determined by the boundary closure. To this end, we consider the error equation on the first two grid points 
\begin{align*}
\wt s \wh{\eu}_{1} &= -\f{1}{5}\wh{\eu}_{1} + \f{2}{5}\wh{\eu}_{2}-\f{1}{5}\wh{\eu}_{3}+\f{6}{5}\wh{\ev}_{1}-\f{7}{5}\wh{\ev}_{2}+\f{1}{5}\wh{\ev}_{3}+\f{12}{5}\tau_1(\wh{\eu}_{1}{\color{black}+\alpha_0\wh{\ev}_{1}}),\\
\wt s \wh{\eu}_{2} &= \f{2}{13}\wh{\eu}_{1} - \f{5}{13}\wh{\eu}_{2}+\f{4}{13}\wh{\eu}_{3}-\f{1}{13}\wh{\eu}_{4}+\f{7}{13}\wh{\ev}_{1}-\f{8}{13}\wh{\ev}_{3}+\f{1}{13}\wh{\ev}_{4},\\
\wt s \wh{\ev}_{1} &= \f{6}{5}\wh{\eu}_{1}-\f{7}{5}\wh{\eu}_{2}+\f{1}{5}\wh{\eu}_{3}-\f{1}{5}\wh{\ev}_{1}+\f{2}{5}\wh{\ev}_{2}-\f{1}{5}\wh{\ev}_{3}+\f{12}{5}\tau_2(\wh{\eu}_{1}{\color{black}+\alpha_0\wh{\ev}_{1}}),\\
\wt s \wh{\ev}_{2} &= \f{7}{13}\wh{\eu}_{1}-\f{8}{13}\wh{\eu}_{3}+\f{1}{13}\wh{\eu}_{4}+\f{2}{13}\wh{\ev}_{1}-\f{5}{13}\wh{\ev}_{2}+\f{4}{13}\wh{\ev}_{3}-\f{1}{13}\wh{\ev}_{4}.
\end{align*}
Using \eqref{es5}-\eqref{es7}, we rewrite the above equations as a $4\times 4$ linear system 
\begin{equation}\label{BoundarySystem}
C_{s}(\wt s)\bm{\Sigma_s} = \m{d_s},    
\end{equation}
where $\bm{\Sigma_s}=[\sigma_0, \sigma_1, \sigma_2, \wh{\eu}_{1}]^\tra$ is the unknown vector, and the right-hand side 
\begin{align*}
\m{d_s}=h^2 &\left[-\f{1}{10}(3\wh v_{xx}(x_1)+2\wh u_{xx}(x_1)),\right.\\
&\f{1}{26}(3\wh v_{xx}(x_1)+2\wh u_{xx}(x_1)),\\
&-\f{1}{10}(3\wh u_{xx}(x_1)+2\wh v_{xx}(x_1)),\\
&\left.\f{1}{26}(3\wh u_{xx}(x_1)+2\wh v_{xx}(x_1))\right]
\end{align*}
is the truncation error vector. 

{\color{black}At $\wt s=0$, the boundary system matrix $C_s$ takes the form 
\begin{equation*}
\renewcommand{\arraystretch}{1.5}
C_s(0) = \begin{bmatrix}
\frac{\sqrt{33}+4}{10}, & \frac{96\alpha_0\tau_1-5\sqrt{33}+73}{40}, & \frac{24\alpha_0\tau_1+7}{10}, & -\frac{12\tau_1(\alpha_0-1)+7}{5} \\
-\frac{\sqrt{33}+4}{26}, & \frac{5\sqrt{33}+23}{104}, & \frac{5}{26}, & -\frac{5}{13} \\
-\frac{\sqrt{33}+4}{10}, & \frac{96\alpha_0\tau_2-5\sqrt{33}+17}{40}, & \frac{24\alpha_0\tau_2-7}{10}, & \frac{7-12\tau_2(\alpha_0-1)}{5}\\
\frac{\sqrt{33}+4}{26}, &\frac{5\sqrt{33}-17}{104}, & -\frac{5}{26}, &\frac{5}{13}
\end{bmatrix},
\renewcommand{\arraystretch}{1.0}
\end{equation*}
which depends on the parameter $\alpha_0$ from the boundary condition, and the penalty parameters $\tau_1,\tau_2$. We solve equation \eqref{BoundarySystem} at $\tilde{s}=0$ and focus on the third component of the solution, $\sigma_2$, that is multiplied with the slowly decaying $\thet_2=1-\mathcal{O}(\wt s)$ in the general solution. We have 
\[
\sigma_2 = -\frac{(25\sqrt{33}-15)(\alpha_0\tau_1+\tau_2+1)}{204(\tau_1+\tau_2)(\alpha_0+1)},
\]
which is well-defined because $\alpha_0\geq 0$ so that $\alpha_0+1\neq 0$, and $\tau_1+\tau_2=0$ violates the stability condition \eqref{StabilityCondition}. }

It is important to note that $\sigma_2=0$ when $\alpha_0\tau_1+\tau_2+1=0$, regardless of the relation between $\wh v_{xx}(x_1)$ and $\wh u_{xx}(x_1)$ in the truncation error vector $\m{d_s}$. This is important, because $\sigma_2$ is multiplied with the slowly decaying component $\thet_2=1-\mathcal{O}(\wt s)$. Componentwise, we can estimate $\bm{\Sigma_s}$ as
\begin{align*}
 |\sigma_0|&\leq C h^2 (|\wh u_{xx}(x_1)|+|\wh v_{xx}(x_1)|), \\
 |\sigma_1|&\leq C h^2 (|\wh u_{xx}(x_1)|+|\wh v_{xx}(x_1)|), \\
 \sigma_2&=0,\\
 \wh{\eu}_{1} &\leq C h^2 (|\wh u_{xx}(x_1)|+|\wh v_{xx}(x_1)|).
\end{align*}

The error $\bm{\ve}$ in Laplace space can be estimated  as 
\begin{align*}
\|\wh{\bm{\ve}}\|_h &= \sqrt{h \sum_{j=1}^\infty |\wh\eu_j|^2+h \sum_{j=1}^\infty |\wh\ev_j|^2} \\
&=h^{0.5}\sqrt{|\wh\eu_1|^2+\f{1}{4}\sum_{j=2}^\infty |\sigma_0\kp^{j-2}+\sigma_1\thet_1^{j-1}|^2+|\wh\ev_1|^2+\f{1}{4}\sum_{j=2}^\infty |\sigma_0\kp^{j-2}-\sigma_1\thet_1^{j-1}|^2 }\\
&\leq h^{0.5} \sqrt{|\wh\eu_1|^2+|\wh\ev_1|^2+ \f{1}{2}\f{|\sigma_0|^2}{1-|\kp|^2} +\f{1}{2}\f{|\sigma_1|^2}{1-|\thet_1|^2}}\\
&\leq Ch^{2.5} (|\wh u_{xx}(x_1)|+|\wh v_{xx}(x_1)|).
\end{align*}
The other case for the right boundary $x=1$ can be analyzed in the same way. We can thus conclude that the overall convergence rate is 2.5 when \eqref{AlphaTau} is satisfied. In this case, the convergence rate is 1.5 orders higher than the boundary truncation error of the upwind SBP operators. 

If \eqref{AlphaTau} is not satisfied, then all components of $\bm{\Sigma_s}$ are of the same order as the truncation error, and a convergence rate of 2 follows, which is one order higher than the boundary truncation error of the upwind SBP operator. 
\end{proof}


In the numerical experiments in Sec.~\ref{sec_ne}, we also verify that the same \textit{gain in convergence} follows for higher order accurate schemes with $p=4$, 5, 6, 7, 8. The procedure of deriving the corresponding error estimates is the same as above for $p=3$.

\section{An upwind SBP-SAT discretization with the WENO property} \label{sec-SBPWENO}
In this section, we construct SBP-SAT discretizations with the WENO property for the model problem \eqref{eqn_adv}. 

To effectively compute weak solutions, which may contain discontinuities, we would use conservative difference to approximate the derivative $u_x$ at the grid point $x_j$.
To this end, we complement the grid $\m{x}$ in \eqref{1d_grid} by $\bar{\m{x}}:=[\bar x_0, \bar x_1,\cdots,\bar x_n]$, which contains $n+1$ flux points with $\bar h_i = \bar x_i-\bar  x_{i-1}$. 
In particular, the first and last flux points coincide with the domain boundaries, i.e., $\bar x_0 = x_1$ and $\bar x_n = x_n$. In the interior, the flux point $\bar x_i$ is the midpoint of $x_i$ and $x_{i+1}$. However, a few flux points close to the boundaries are shifted, resulting in a nonuniform grid of $\bar{\m{x}}$. The exact location of those flux points will be specified in the derivation of the scheme. 
Next, we write the discretization matrix in the conservative form,
\begin{equation}\label{Dm_flux}
D_{m}\m{u}|_i= \frac{\hat u_i-\hat u_{i-1}}{\bar h_i}, 
\quad i=1, \ldots, n,
\end{equation}
where the hat variable $\hat u_i$ is the numerical flux,
which typically is a Lipschitz continuous function of several neighboring values $u_i$.

The essence of WENO is to 
use a nonlinear convex combination of numerical fluxes from all the candidate small stencils,
such that the scheme can achieve arbitrarily high order accuracy in smooth regions and resolve shocks or other discontinuities sharply and in an essentially non-oscillatory fashion.

In the following, we consider the cases $p=3$ and $p=4$ in Sec.~\ref{sec-p3} and Sec.~\ref{sec-p4}, respectively, followed by the stabilization technique in Sec.~\ref{sec-stabilization}.

\subsection{Interior order $p=3$}\label{sec-p3}

In the interior, the upwind SBP operator $D_m$ with third order accuracy has a four-point stencil that is biased to the left,
\begin{equation}\label{p3_int}
D_m \m{u}|_i = \frac{1}{h}\left(\frac{1}{6}u_{i-2}-u_{i-1}+\frac{1}{2}u_i+\frac{1}{3}u_{i+1}\right), \quad i = 3,4,\cdots, n-2.
\end{equation}
The boundary closures are 
\begin{align*}
&D_m \m{u}|_1 = \frac{1}{h}(-u_{1}+u_{2}),\quad D_m \m{u}|_2 = \frac{1}{h}\left(-\frac{9}{13}u_{1}+\frac{5}{13}u_{2}+\frac{4}{13}u_{2}\right), \\
&D_m \m{u}|_{n-1} = \frac{1}{h}\left(\frac{2}{13}u_{n-3}-\frac{12}{13}u_{n-2}+\frac{5}{13}u_{n-1}+\frac{5}{13}u_{n}\right),\\\
&D_m \m{u}|_{n} = \frac{1}{h}\left(\frac{2}{5}u_{n-2}-\frac{9}{5}u_{n-1}+\frac{7}{5}u_{n}\right).
\end{align*}
These boundary closures are designed so that the SBP property is satisfied, but are only first order accurate. The weights of the SBP norm on the first two grid points are $\frac{5}{12}$ and $\frac{13}{12}$. As a consequence \cite{Fisher2011}, we have 
\begin{align*}
&\bar x_1 =\bar x_0+ \frac{5}{12}h,\;\;\; \bar x_{n-1} =\bar x_n- \frac{5}{12}h, \\
&\bar x_i=\frac{1}{2}(x_i+x_{i+1}),\quad i = 2,3,\cdots, n-2.
\end{align*}

Next, we rewrite the difference stencils in $D_m$ in the form of numerical fluxes \eqref{Dm_flux}. 
For each interior flux point $2\leq i\leq n-2$, numerical fluxes \eqref{p3_int} can be represented in the form 
\begin{align*}
\hat u_i =& -\frac{1}{3} u_{i-1} +\frac{7}{6}u_{i} + \frac{1}{6} u_{i+1} 
=\frac{1}{3} \left(\frac{1}{2}u_i+\frac{1}{2}u_{i+1}\right)+\frac{2}{3} \left(-\frac{1}{2}u_{i-1}+\frac{3}{2}u_{i}\right) \\
:=& d_i^{(1)} \hat u_i^{(1)}+d_i^{(2)} \hat u_i^{(2)}
\end{align*}
Here, $\hat u_i^{(1)}$ and $\hat u_i^{(2)}$ are numerical fluxes on the two candidate stencils $\{x_{i}, x_{i+1}\}$ and $\{x_{i-1}, x_{i}\}$, respectively.  
These two linear weights $d_i^{(1)}$ and $d_i^{(2)}$ will be turned into nonlinear weights when adding the WENO property.

On the first flux point $\bar x_0$, we take the numerical flux to be $\hat u_0 = u_1$. By using the same requirement  \eqref{Dm_flux} and the boundary stencil of $D_m$ with $i=1$ and 2, we obtain the numerical flux $\hat u_1 = \frac{7}{12}u_1+\frac{5}{12}u_2$. Note that the numerical fluxes on the first two flux points have fixed stencils.  On the right boundary, we again take $\hat u_n=u_n$. For the numerical flux on $i = n-1$, we make the ansatz 
$$\hat u_{n-1}= d_{n-1}^{(1)} \left(-\frac{7}{12}u_{n-2}+\frac{19}{12}u_{n-1}\right)+d_{n-1}^{(2)} \left(\frac{5}{12}u_{n-1}+\frac{7}{12}u_{n}\right).$$ 
By requiring \eqref{Dm_flux} with $i=n-1$, we obtain $d_{n-1}^{(1)}=\frac{2}{7}$ and $d_{n-1}^{(2)}=\frac{5}{7}$, which will also be changed to nonlinear weights in the SBP-WENO scheme.

There exist many nonlinear weights with different properties. For example, in \cite{Yamaleev2009} the following nonlinear weights are derived, $i=2,3,\cdots,n-1$, which follows the idea of WENOZ,
\begin{align*}
&\tau_i = (u_{i+1}-2u_i+u_{i-1})^2,\\
&\beta_i^{(1)} = (u_i-u_{i-1})^2,\quad \beta_i^{(2)} = (u_{i+1}-u_{i})^2,\\
&w_i^{(j)} = \frac{\alpha_i^{(j)}}{\alpha_i^{(1)}+\alpha_i^{(2)}}, \quad 
\alpha_i^{(j)} = d_i^{(j)}\left(1+\frac{\tau_i}{\varepsilon+\beta_i^{(j)}}\right),\ j=1,2.
\end{align*}
Here, the small constant $\varepsilon$ avoids division by zero. 
Replacing the linear weights by the above nonlinear weights in the corresponding numerical fluxes, we obtain the difference operator  
\begin{equation}\label{Dmw_flux}
D_{mw}\m{u}|_i= \frac{\hat u^w_i-\hat u^w_{i-1}}{\bar h_i}, 
\end{equation}
with the nonlinear numerical fluxes 
$$\hat{u}^{w}_{i} =\omega_i^{(1)}\hat{u}_i^{(1)} + \omega_i^{(2)}\hat{u}_i^{(2)}.$$ 
The resulting difference operator $D_{mw}$ has the desired WENO property, however, the SBP property is lost. As a consequence, energy stability cannot be proved.

\subsection{Interior order $p=4$} \label{sec-p4}

In the interior, the upwind SBP operator $D_m$ with fourth order accuracy has a five-point stencil that is biased to the left,
\begin{equation}\label{p4_int}
D_m \m{u}|_i = \frac{1}{h}\left(-\frac{1}{12}u_{i-3}+\frac{1}{2} u_{i-2} - \frac{3}{2}u_{i-1}+\frac{5}{6}u_i+\frac{1}{4}u_{i+1}\right), \quad i = 5,6,\cdots, n-4,
\end{equation}
which can be rewritten in the conservative form with numerical fluxes, 
\begin{align*}
 \hat{u}_{i} =& \frac{1}{12}u_{i-2}-\frac{5}{12} u_{i-1} + \frac{13}{12}u_{i}+\frac{1}{4}u_{i+1} 
:= d_i^{(1)} \hat u_i^{(1)}+d_i^{(2)} \hat u_i^{(2)} +d_i^{(2)} \hat u_i^{(2)}. 
\end{align*}
Here, the linear weights are 
\begin{align}\label{d123}
d_i^{(1)} = \frac{1}{2}, \quad  d_i^{(2)}=\frac{1}{4}, \quad  d_i^{(3)}=\frac{1}{4},
\end{align}
and the numerical fluxes on substencils are 
\begin{align*}
    \hat{u}_i^{(1)}= \frac{1}{2}u_i+\frac{1}{2}u_{i+1}, \quad
\hat{u}_i^{(2)}=-\frac{1}{2}u_{i-1}+\frac{3}{2}u_{i},\quad
\hat{u}_i^{(3)}=\frac{1}{3}u_{i-2}-\frac{7}{6}u_{i-1}+\frac{11}{6}u_{i}.
\end{align*}
The flux points are $\bar x_i=\frac{1}{2}(x_i+x_{i+1})$ in the interior $i = 4,5,\cdots, n-4$, and are shifted near the boundaries,
\begin{align*}
&\bar{x}_0 = x_1, \quad \bar x_1 =\bar x_0+ \frac{49}{144}h, \quad
\bar x_2 = \bar x_1 + \frac{61}{48}h, \quad \bar x_3 =\bar x_2 + \frac{41}{48}h, \\
& \bar x_n = x_n, \quad \bar x_{n-1} =\bar x_n- \frac{5}{12}h, \quad
\bar x_{n-2} = \bar{x}_{n-1} -\frac{61}{48}h, \quad \bar{x}_{n-3} = \bar x_{n-2}-\frac{41}{48} h.
\end{align*}

The numerical fluxes on each point $\bar x_i$ are constructed such that $D_m \m{u}|_i=\frac{\hat{u}_i - \hat{u}_{i-1}}{\bar h_i}$. WENO methodology can be applied by replacing the linear weights \eqref{d123} by nonlinear weights. More precisely, we have designed the following nonlinear weights with the help of smoothness indicator. In the interior, we have 
\begin{align*}
&\tau_i = (u_{i+1}-3u_i+3u_{i-1}-u_{i-2})^2,\\
&\beta_i^{(1)} = (u_{i+1}-u_{i})^2,\qquad \beta_i^{(2)} = (u_{i}-u_{i-1})^2,\\
& \beta_i^{(3)} = \frac{13}{12}(u_i-2u_{i-1}+u_{i-2})^2 + \frac{1}{4}(5u_i-8u_{i-1}+3u_{i-2})^2, \\
&w_i^{(j)} = \frac{\alpha_i^{(j)}}{\alpha_i^{(1)}+\alpha_i^{(2)}+\alpha_i^{(3)}}, \quad \alpha_i^{(j)} = d_i^{(j)}\left(1+\frac{\tau_i}{\varepsilon+\beta_i^{(j)}}\right),\ j=1,2,3.
\end{align*}
Note that when $u$ is smooth in $\{x_{i-2},\ldots, x_{i+1}\}$, we have 
\begin{align*}
    & \beta_1 = (u'_i)^2 h^2 + u'_i u''_i h^3 + \left( \frac{1}{4} (u''_i)^2 + \frac{1}{3}u'_i u^{(3)}_i\right)h^4 + \mathcal{O}(h^5),\\
    & \beta_2 = (u'_i)^2 h^2 - u'_i u''_i h^3 + \left( \frac{1}{4} (u''_i)^2 + \frac{1}{3}u'_i u^{(3)}_i\right)h^4 + \mathcal{O}(h^5),\\
    & \beta_3 = (u'_i)^2 h^2 + 2u'_i u''_i h^3 + \left( \frac{25}{12} (u''_i)^2 - \frac{32}{12}u'_i u^{(3)}_i\right)h^4 + \mathcal{O}(h^5),\\
    & \tau = (u^{(3)}_i)^2 h^6 + \mathcal{O}(h^7).
\end{align*}
Suppose $\varepsilon=0$, the above equations lead to
\begin{align*}
    \alpha^{(j)}_i = d^{(j)}_i \left( 1 + \mathcal{O}(h^4)\right), \;\; j=1, 2, 3.
\end{align*}
Furthermore, $w_i^{(j)}=d_i^{(j)}(1+\mathcal{O}(h^4))$. Consequently, 
\begin{align*}
    \sum_{j=1}^{3} \left(w_i^{(j)} \hat u_i^{(j)}  - d_i^{(j)} \hat u_i^{(j)} \right)
    = \mathcal{O}(h^4).
\end{align*}

This indicates that the WENO methodology does not destroy the fourth order accuracy in the interior. Moreover, to reduce the impact of $\epsilon$, we take $\epsilon=h^2$ in our numerical simulation. Thus, $\epsilon$ has the same order of magnitude as those smoothness indicators in smooth cases and the above analysis is still valid.

In the Appendix, we present the formulas of the WENO schemes near the boundaries.
{It is noticed that the number and size of small stencils vary with the location $\bar{x}_i$. 
Essentially, we choose the small stencil to meet the conditions that numerical flux on each substencil attains low order accuracy and all linear weights are positive.

\subsection{Stabilization}\label{sec-stabilization}
As discussed, the $D_{mw}$ does not have the SBP property. To derive an SBP-WENO scheme, we add a stabilization term to $D_{mw}$ such that the resulting operator, $D_{mws}$, has both the SBP and WENO property. We follow the main steps from \cite{Yamaleev2009,Yamaleev2009b,Fisher2011}, and 
 decompose $D_{mw}$ to $D_{mw}=H^{-1}(Q_{mw}+R_{mw})$, where the skew-symmetric part is $Q_{mw}=\frac{1}{2}(HD_{mw}-(HD_{mw})^\tra+B)$ and the symmetric part is $R_{mw}=\frac{1}{2}(HD_{mw}+(HD_{mw})^\tra-B)$. Because of the  dependence of $R_{mw}$ on the nonlinear weights, the matrix $R_{mw}$ is not guaranteed to be positive semidefinite. To this end, we make the ansatz $D_{mws}:=H^{-1}(Q_{mw}+R_{mw}+R_s)$, where the stabilization term $R_s$ leads to a positive semidefinite matrix  $R_{mw}+R_s$ so that $D_{mws}$ is an SBP operator with the desired accuracy property. 
 
 To construct $R_s$, we decompose $R_{mw}$. As an example of $p=4$, we have  $$R_{mw}=\Delta\Delta^\tra\Delta\Lambda_3\Delta^\tra\Delta\Delta^\tra+\Delta\Delta^\tra\Lambda_2\Delta\Delta^\tra+\Delta\Lambda_1\Delta^\tra,$$
 where $\Delta$ is an $n$-by-$n+1$ matrix with only nonzero elements $\Delta_{i,i}=-1$ and $\Delta_{i,i+1}=1$ for $i=1,2,\cdots,n$. The diagonal matrices $\Lambda_1$, $\Lambda_2$ and $\Lambda_3$ are in general not positive semidefinite. Using them, we construct new diagonal matrices $\Lambda_{1s}$, $\Lambda_{2s}$ and $\Lambda_{3s}$ with diagonal elements $(\Lambda_{1s})_{ii}=\frac{1}{2}\left(\sqrt{((\Lambda_{1})_{ii})^2+(\delta_1)^2}-(\Lambda_{1})_{ii}\right)$, $(\Lambda_{2s})_{ii}=\frac{1}{2}\left(\sqrt{((\Lambda_{2})_{ii})^2+(\delta_2)^2}-(\Lambda_{2})_{ii}\right)$ and $(\Lambda_{3s})_{ii}=\frac{1}{2}\left(\sqrt{((\Lambda_{3})_{ii})^2+(\delta_3)^2}-(\Lambda_{3})_{ii}\right)$ for some constant $\delta_1$, $\delta_2$, and $\delta_3$ that depend on $h$. 
 In particular, for the inner point $i=5,6,\cdots,n-4$, 
 \begin{equation*}
    \begin{aligned}
        (\Lambda_{1})_{ii}=& \frac{1}{4}w^{(2)}_{i-1} 
        +\frac{1}{4}w^{(3)}_{i-1} -\frac{1}{4}w^{(2)}_{i} 
        -\frac{5}{12}w^{(3)}_{i} +\frac{1}{6}w^{(3)}_{i+1},\\
        (\Lambda_{2})_{ii}=& \frac{1}{4}w^{(2)}_{i}
        +\frac{1}{12}w^{(3)}_{i} - \frac{1}{3}w^{(3)}_{i+1},\\
        (\Lambda_{3})_{ii}=& \frac{1}{6}w^{(3)}_{i}. 
    \end{aligned}
 \end{equation*}
 It is observed that $(\Lambda_{3})_{ii}$ is always non-negative and we can set $\Lambda_{3,s}=\Lambda_{3}$ directly, but modification on $(\Lambda_{1})_{ii}$ and $(\Lambda_{2})_{ii}$ is necessary. Moreover, for smooth cases, $(\Lambda_{1})_{ii}=\mathcal{O}(h^4)$ and $(\Lambda_{2})_{ii}=\mathcal{O}(h^4)$. Therefore, we can take $\delta_1=\delta_2$ to be $h^4$ to avoid affecting accuracy. 
 Finally, let $R_{s}=\Delta\Delta^\tra\Delta\Lambda_{3s}\Delta^\tra\Delta\Delta^\tra+\Delta\Delta^\tra\Lambda_{2s}\Delta\Delta^\tra+\Delta\Lambda_{1s}\Delta^\tra$, which leads to the positive semidefiniteness of $R_{mw}+R_s$.

 The case with $p=3$ is very similar to the case $p=4$, with $\Lambda_3$ being a zero matrix. To avoid repetition, details are not described in this article.

\section{Numerical experiments}\label{sec_ne}
In this section, we present numerical examples to verify the theoretical analysis. We start with convergence studies for problems with smooth solutions, and consider the linear SBP-SAT discretization in Section \ref{ex_linear_scheme}, and the WENO scheme in Section \ref{sec_exp_WENO}. In addition, we also use the SBP-WENO method to solve a problem with nonsmooth solutions. 

\subsection{Convergence rate for the linear schemes}\label{ex_linear_scheme}
We consider the advection equation \eqref{eqn_adv} with a manufactured smooth solution $U(x,t) = \sin(2\pi(x-t)+1)$. 
{\color{black}We discretize in space by the upwind operators with interior accuracy $q=3$, 4, 5, 6, 7, and 8.} We choose a stepsize in the Runge-Kutta time integrator small enough so that the error is dominated by the spatial discretization. 

The discretization is stable for any penalty parameter $\tau\leq -1/2$. According to the accuracy analysis in Section \ref{sec_err_adv} for $p=3$, the convergence rate is 2.5 when $\tau=-1$, and is 2 when $\tau\neq -1$. That is, the convergence rate is 1.5 orders higher than the boundary truncation error when $\tau=-1$, and one order higher for other choices of $\tau$. This is clearly observed in the first error plot in Figure \ref{Ex1_order3}. 
{\color{black}In addition, we also observe the same phenomena for higher order discretizations $q=4$, 5, 6, 7 and 8, see Figure \ref{Ex1_order3}}. This indicates that $\tau=-1$ is a good choice even for higher order accurate discretizations. 

\begin{figure}
\centering
\begin{subfigure}{0.49\textwidth}
\includegraphics[width=\textwidth]{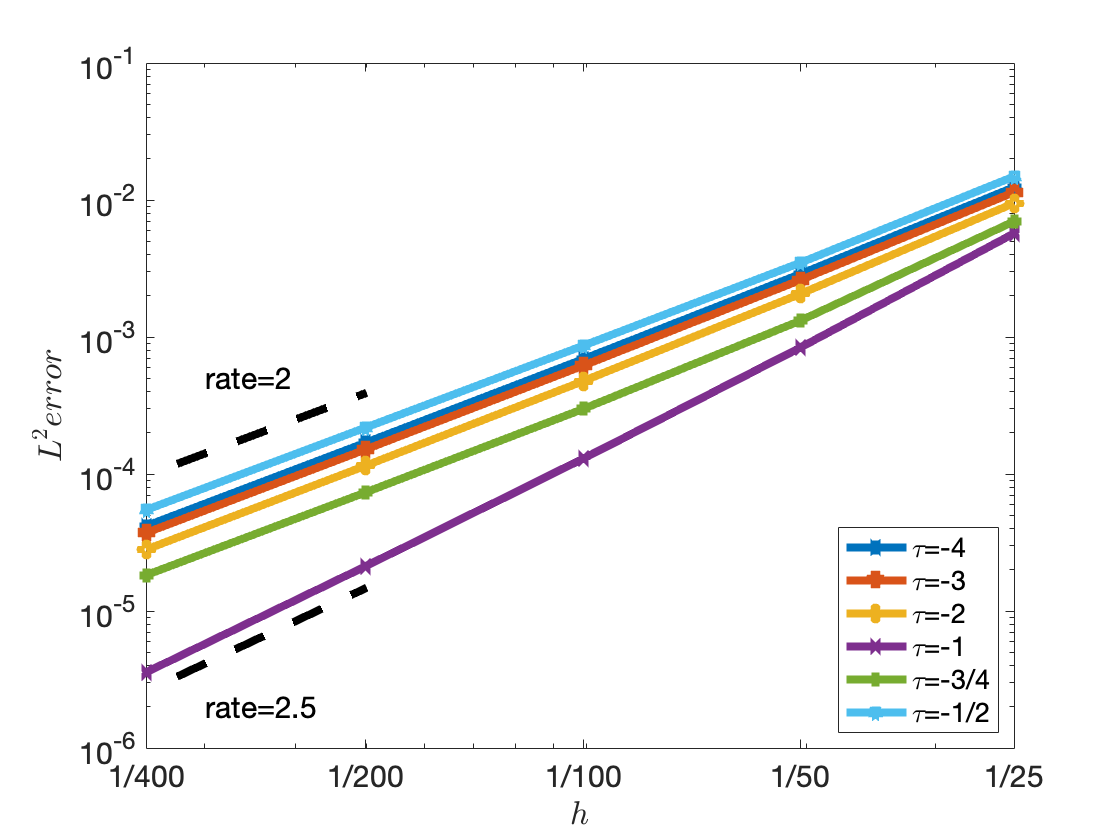}
\caption{$p=3$.}
\end{subfigure}
\begin{subfigure}{0.49\textwidth}
\includegraphics[width=\textwidth]{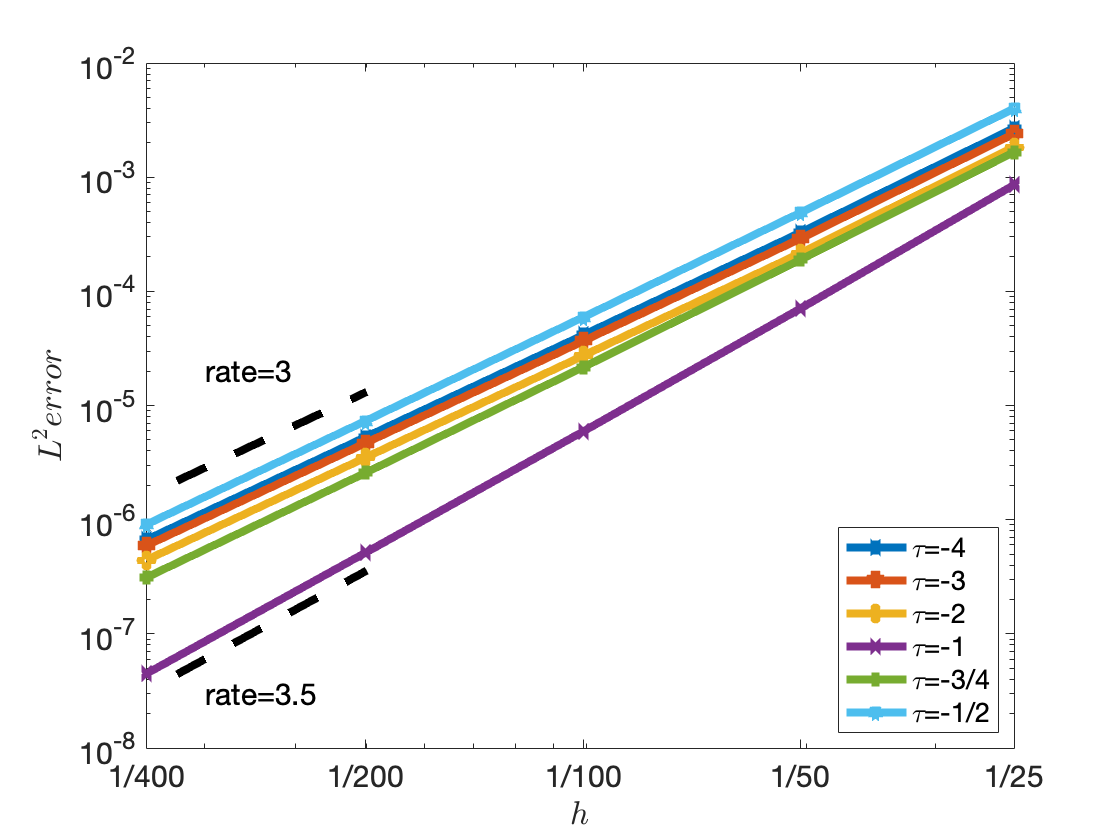}
\caption{$p=4$.}
\end{subfigure}\\
\begin{subfigure}{0.49\textwidth}
\includegraphics[width=\textwidth]{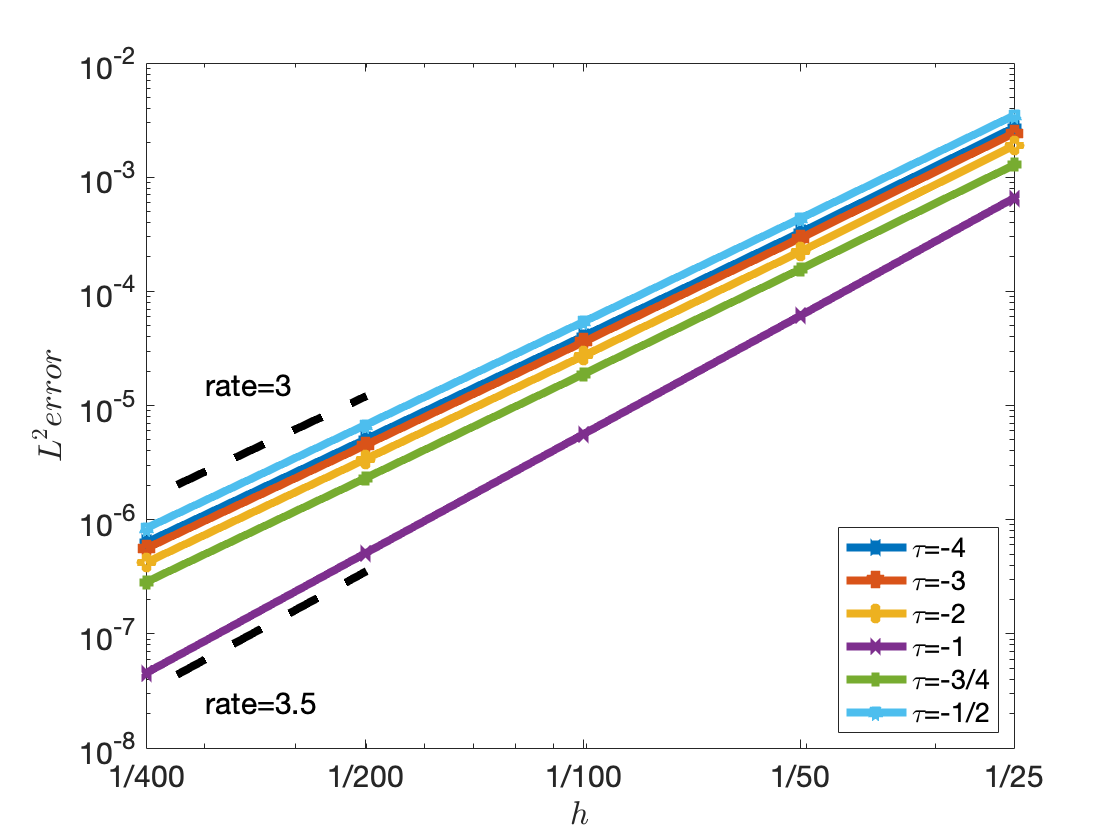}
\caption{\color{black}$p=5$.}
\end{subfigure}
\begin{subfigure}{0.49\textwidth}
\includegraphics[width=\textwidth]{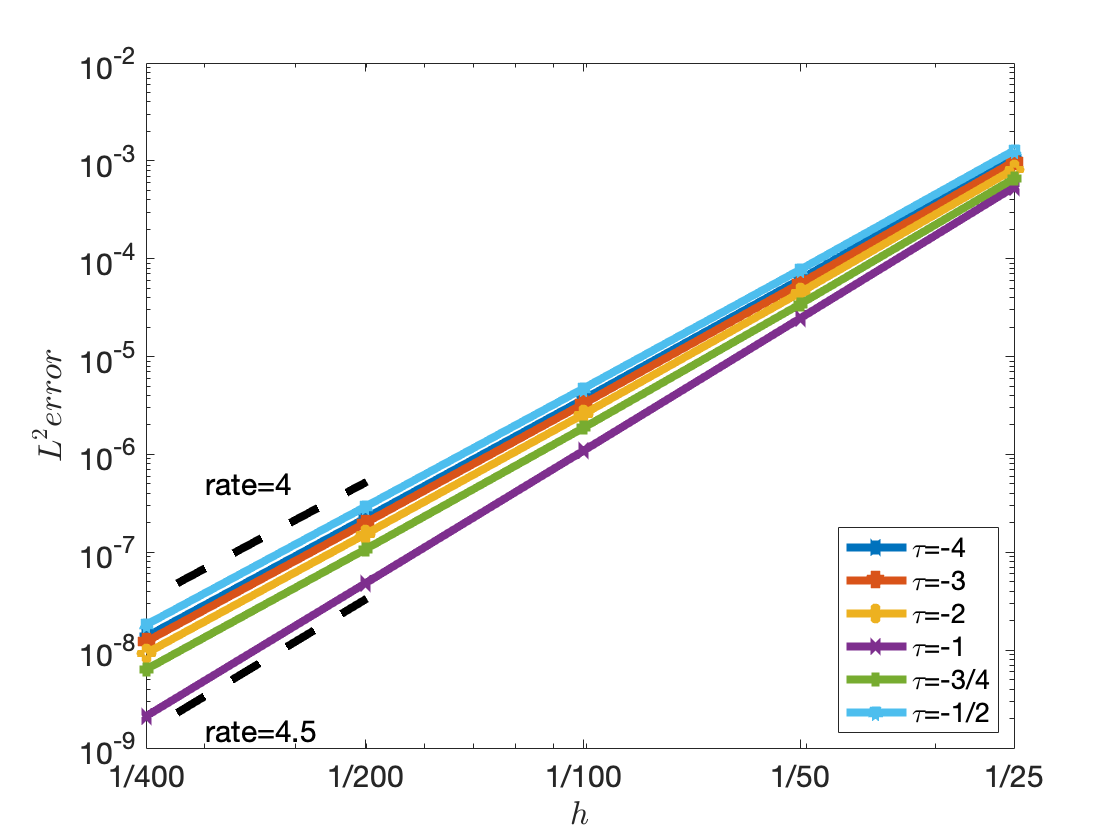}
\caption{\color{black}$p=6$.}
\end{subfigure}\\
\begin{subfigure}{0.49\textwidth}
\includegraphics[width=\textwidth]{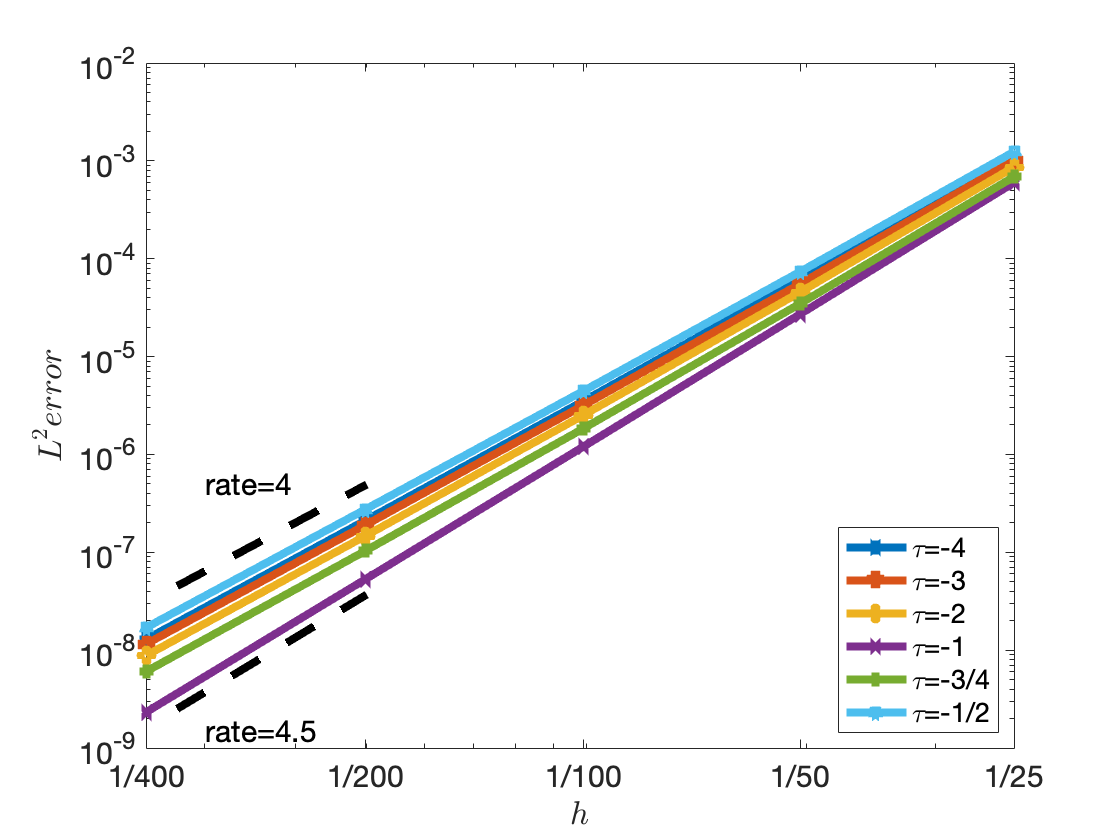}
\caption{\color{black}$p=7$.}
\end{subfigure}
\begin{subfigure}{0.49\textwidth}
\includegraphics[width=\textwidth]{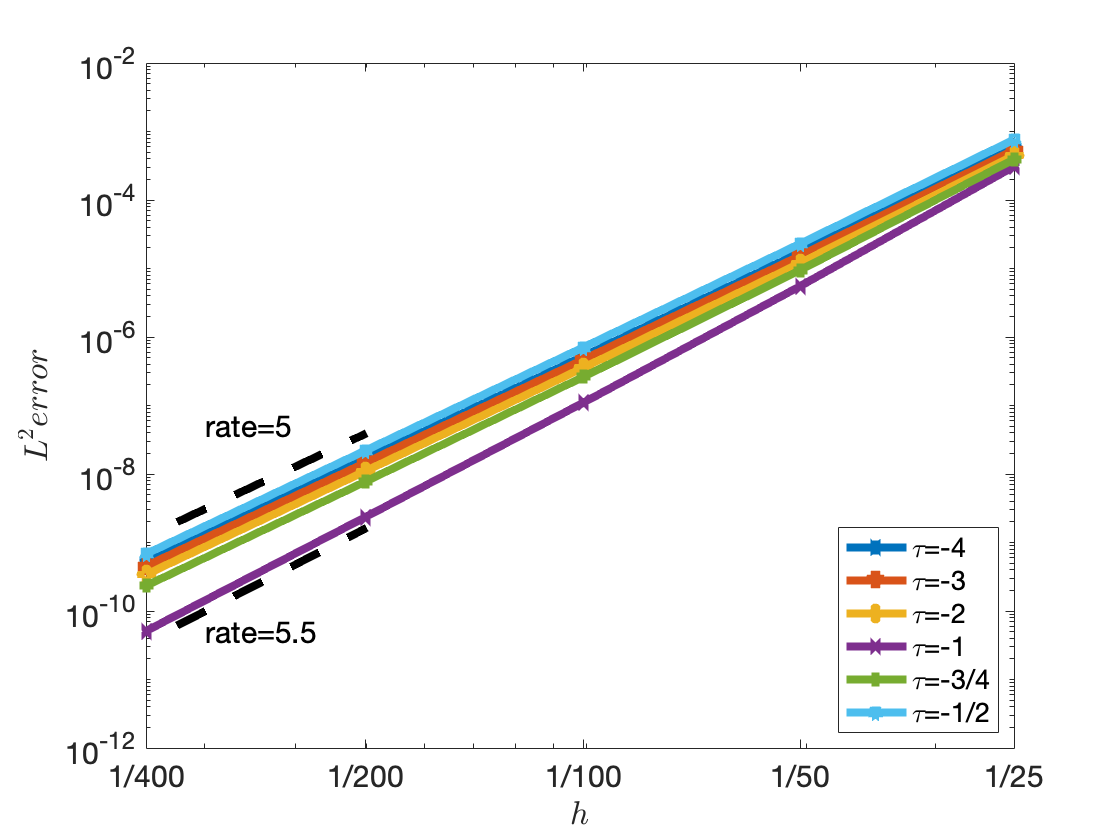}    
\caption{\color{black}$p=8$.}
\end{subfigure}
\caption{$L^2$ error for solving the advection equation \eqref{eqn_adv} with different values of $p$ and $\tau$.}
\label{Ex1_order3}
\end{figure}

{\color{black}Next, we consider the first order hyperbolic system \eqref{hs} with a manufactured smooth solution
\[
U = \begin{bmatrix}
-\sin(2\pi(x+t))+\cos(2\pi(x-t))\\
        \sin(2\pi(x+t))+\cos(2\pi(x-t))
        \end{bmatrix},
\]
and final time $T=1$. In the boundary condition \eqref{hs_bc}, we choose $\alpha_0=1/2$ and  $\alpha_1=0$. We choose the penalty parameters $\tau_2=-1/3$, $\tau_3=0$, $\tau_4=1$, and vary $\tau_1$. According to the error estimate in Theorem 6, the convergence rate is 2.5 if $\alpha_0\tau_1+\tau_2+1=0$, i.e. $\tau_1=-4/3$, while other stable choices of $\tau_1$ lead to a convergence rate of 2. This is clearly observed in the left panel of Figure \ref{Ex1_system}. 
In addition, we have also used upwind SBP operators with $p=4$, 5, 6, 7 and 8. 
When $\alpha_0\tau_1+\tau_2+1=0$, i.e. $\tau_1=-4/3$, the convergence rate is 1.5 orders higher than the boundary truncation error; other choices of $\tau_1$ gives a convergence rate that is one order higher than the boundary truncation error. This is consistent with the accuracy analysis for $p=3$.} 

\begin{figure}
\centering
\begin{subfigure}{0.49\textwidth}
\includegraphics[width=\textwidth]{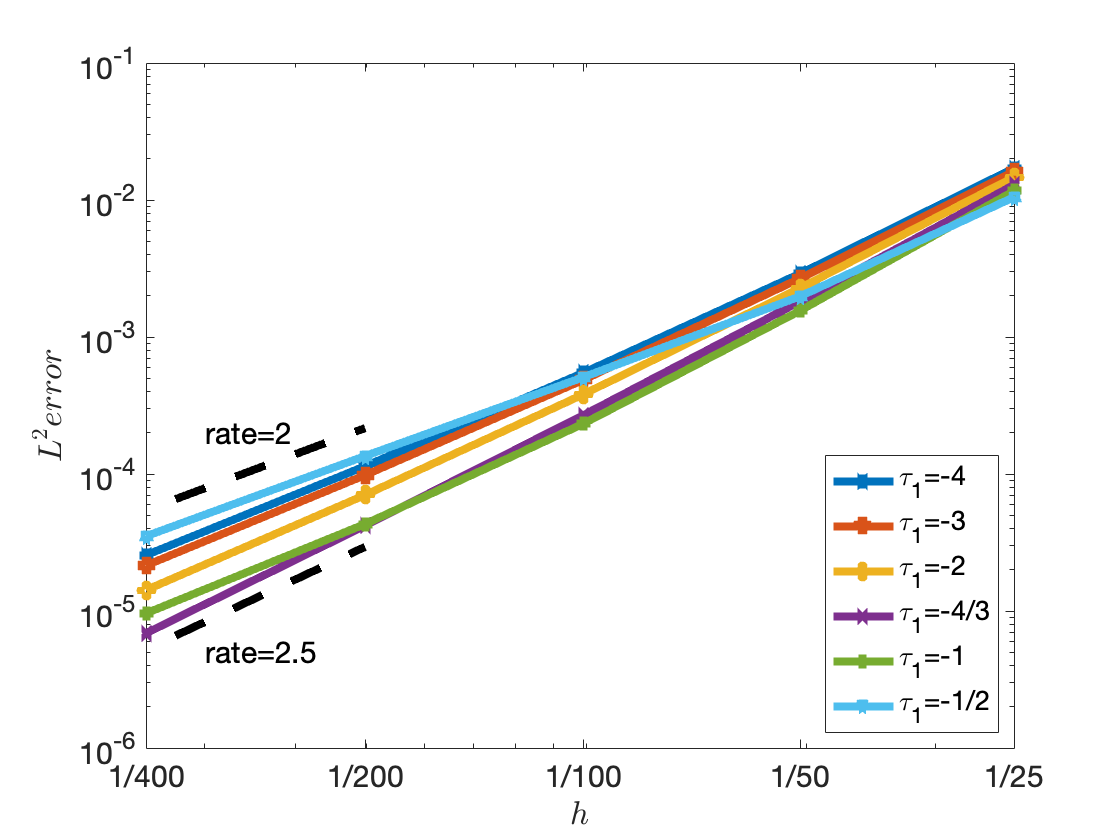}
\caption{$p=3$.}
\end{subfigure}
\begin{subfigure}{0.49\textwidth}
\includegraphics[width=\textwidth]{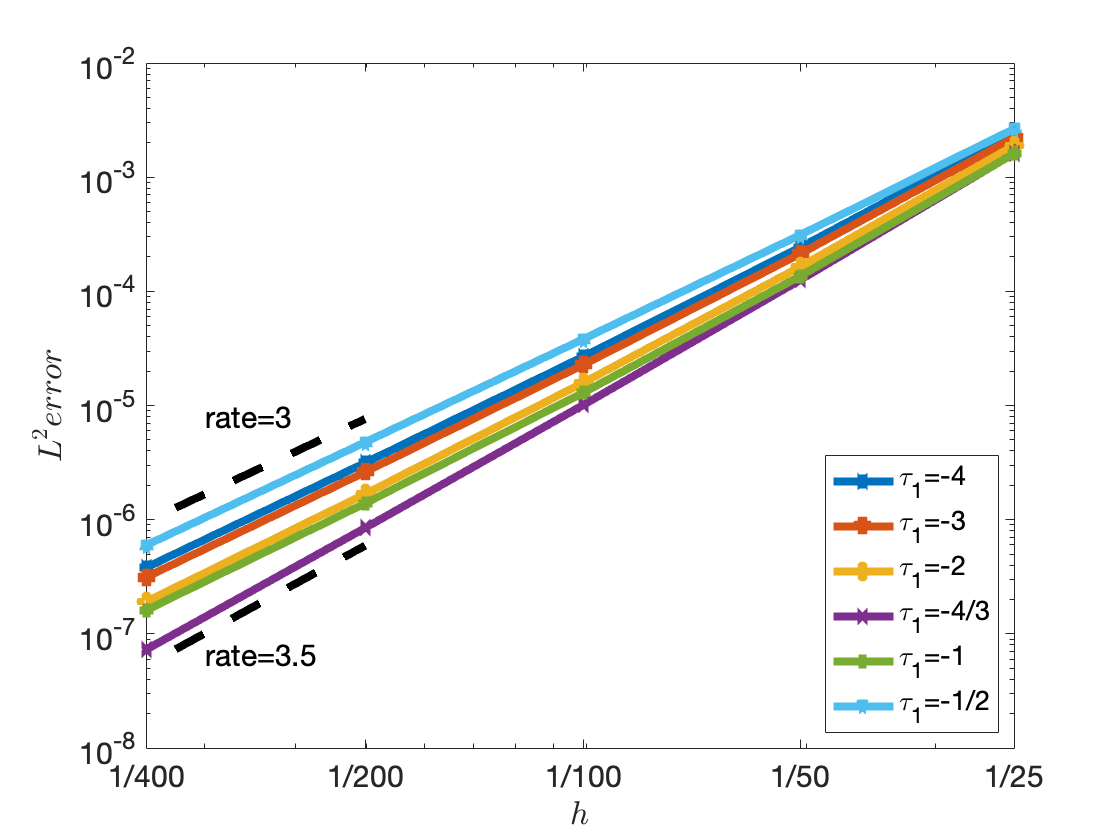}
\caption{$p=4$.}
\end{subfigure}\\
\begin{subfigure}{0.49\textwidth}
\includegraphics[width=\textwidth]{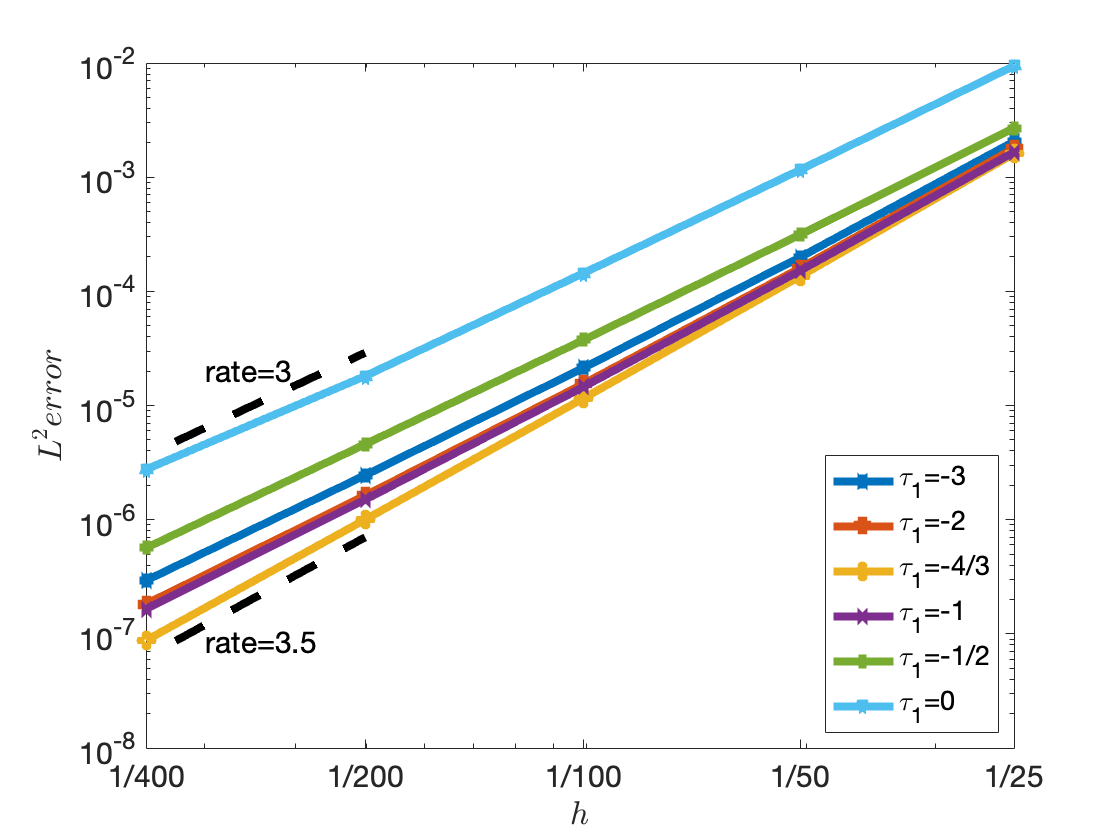}
\caption{\color{black}$p=5$.}
\end{subfigure}
\begin{subfigure}{0.49\textwidth}
\includegraphics[width=\textwidth]{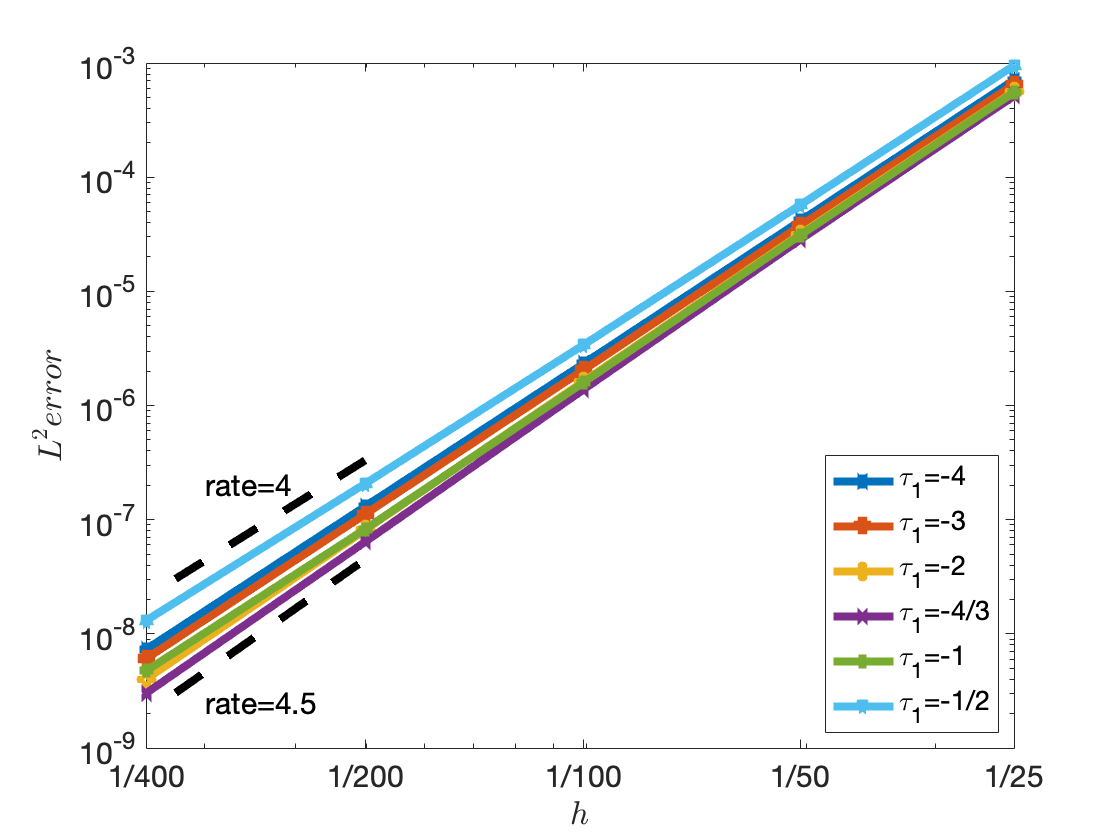}
\caption{\color{black}$p=6$.}
\end{subfigure}\\
\begin{subfigure}{0.49\textwidth}
\includegraphics[width=\textwidth]{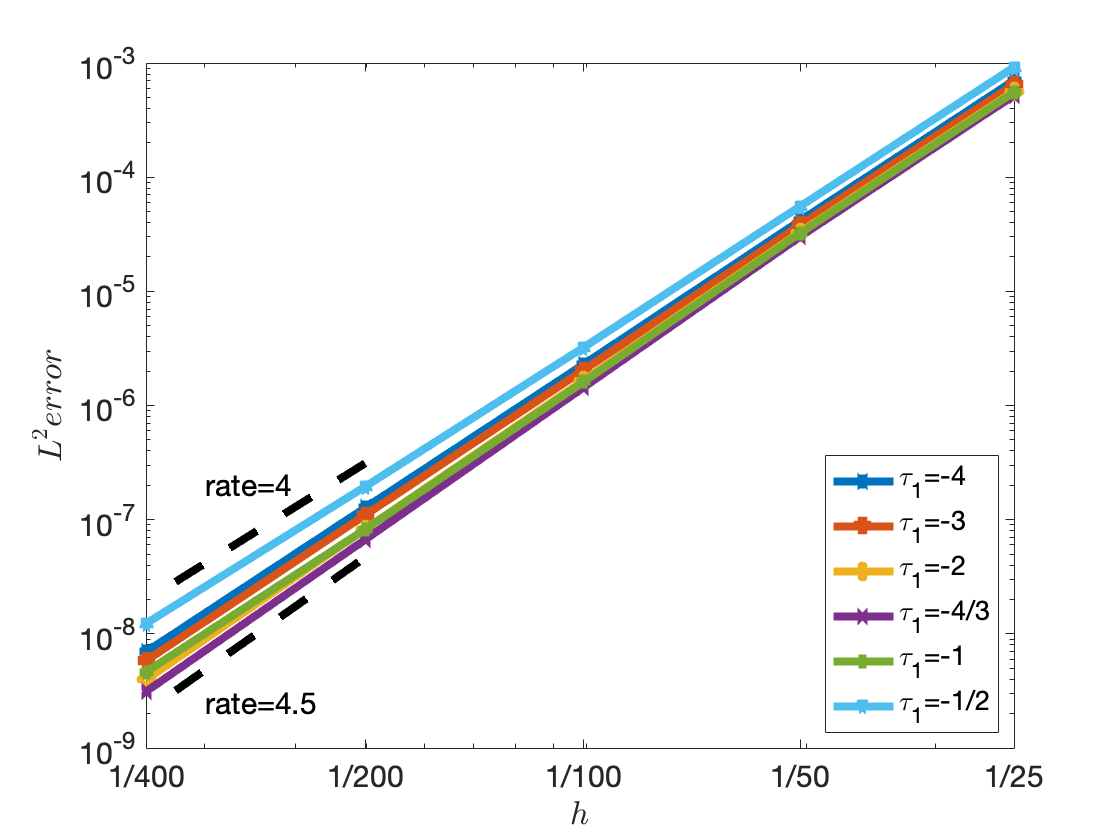}
\caption{\color{black}$p=7$.}
\end{subfigure}
\begin{subfigure}{0.49\textwidth}
\includegraphics[width=\textwidth]{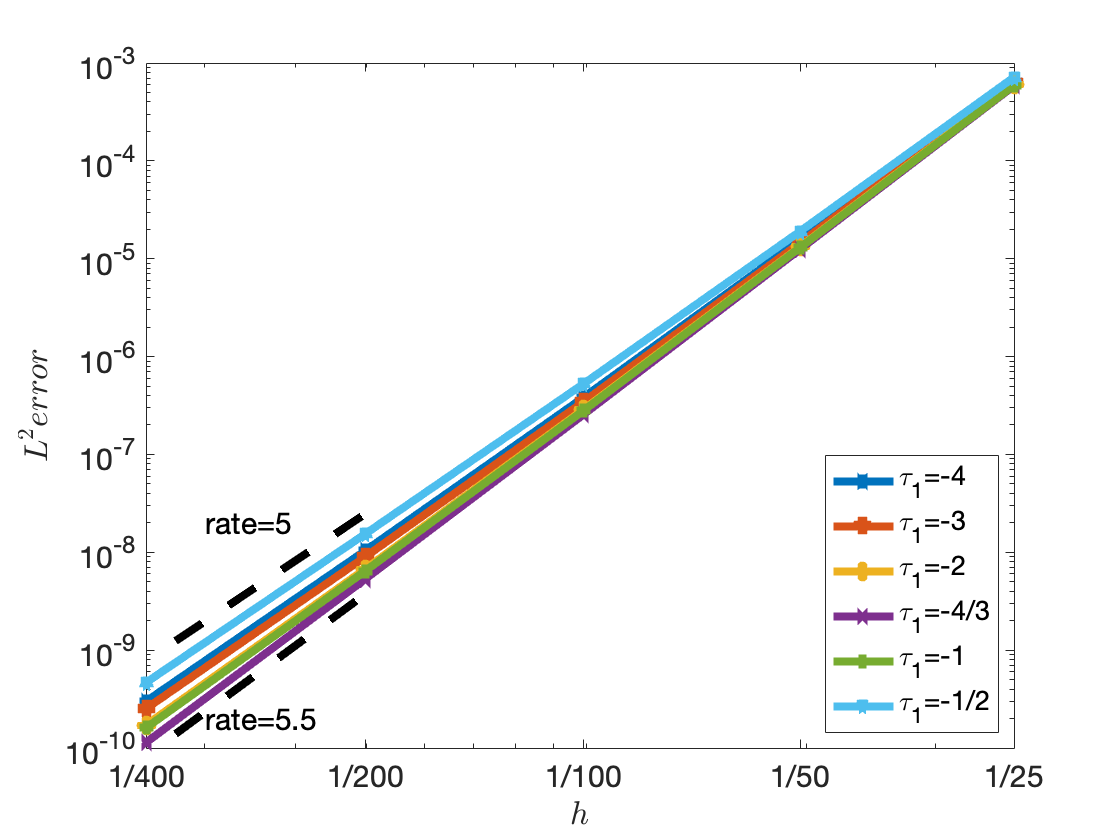}
\caption{\color{black}$p=8$.}
\end{subfigure}
\caption{$L^2$ error solving the hyperbolic system \eqref{hs} with different values of $p$ and $\tau_1$ for the hyperbolic system.}
\label{Ex1_system}
\end{figure}

\subsection{SBP-WENO}\label{sec_exp_WENO}
We now consider the SBP-WENO method developed in Section \ref{sec-SBPWENO}. In all experiments, we choose the parameters $\epsilon= h^2$ and $\delta=h^4$.

First, we consider the same smooth problem as in Sec.~\ref{ex_linear_scheme} for the advection equation. The error is plotted in Figure \ref{Ex2_order3_smooth}. Clearly, the WENO stencils and the stabilization terms do not destroy the accuracy property of the original SBP discretization.

\begin{figure}
\centering
\includegraphics[width=0.49\textwidth]{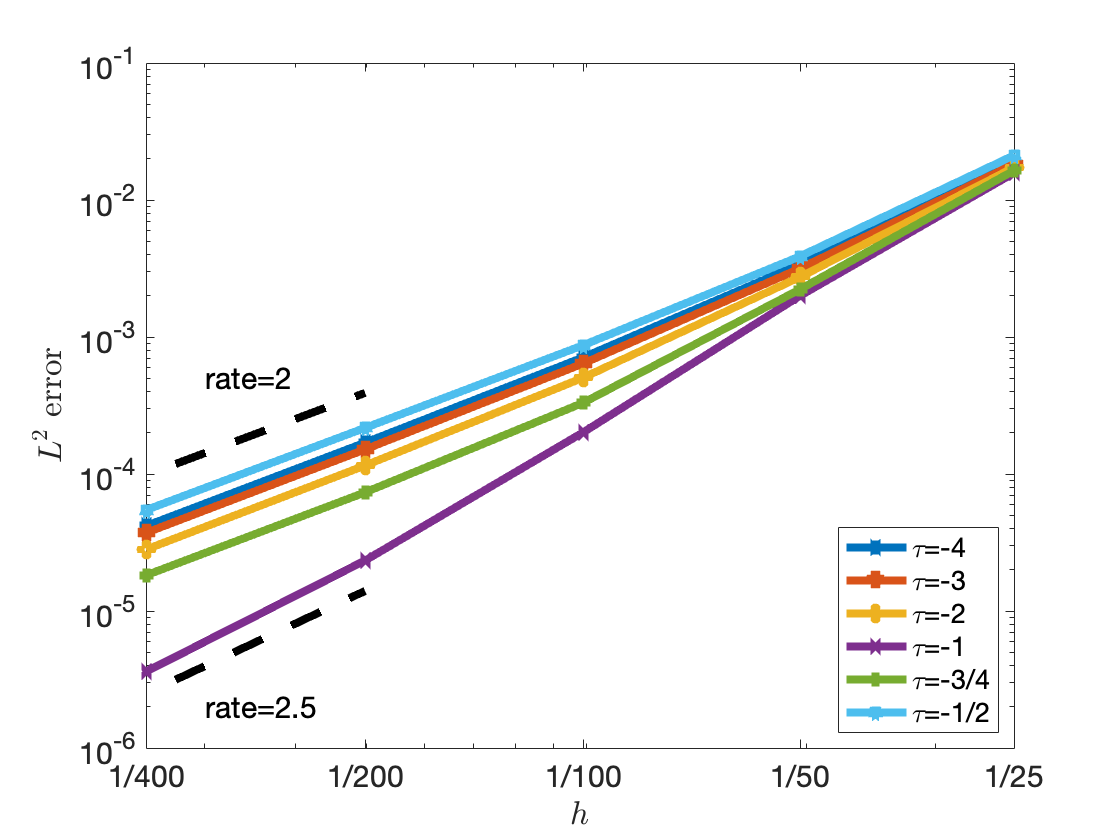}
\includegraphics[width=0.49\textwidth]{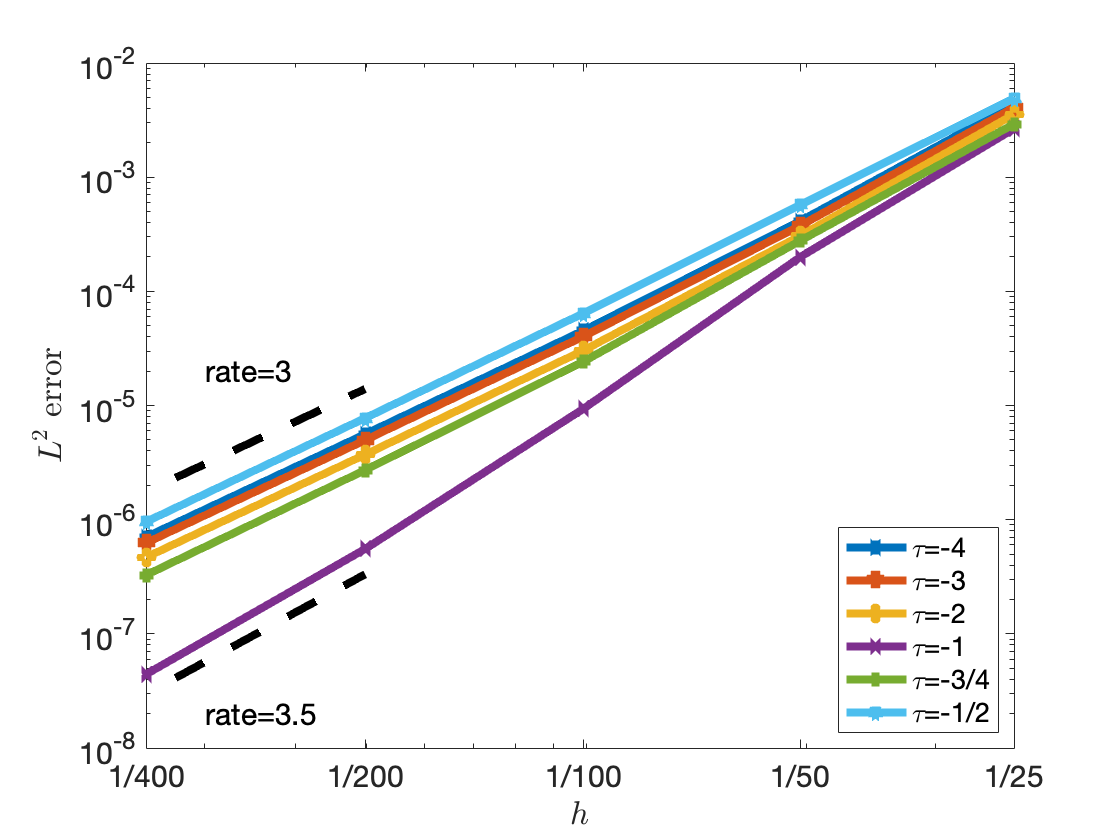}
\caption{$L^2$ error of the SBP-WENO scheme for $p=3$ (left) and 4 (right) with different values of $\tau$ for the advection equation.}
\label{Ex2_order3_smooth}
\end{figure}

Next, we consider the advection equation in domain $[-1,1]$ with nonsmooth solution. We choose the initial condition to be zero, and the inflow boundary condition 
\begin{align*}
    u(-1, t) = \begin{cases}
        \left[e^{-\beta (-1-t-z_{-})^2} + e^{-\beta (-1-t-z_{+}))^2} + 4e^{-\beta (-1-t-z)^2}\right]/6, & 0\leq t\leq 0.4,\\
        1, & 0.6\leq t\leq 0.8, \\
        1 - |10(-1-t+2.1)|, & 1\leq t\leq 1.2, \\
        \left[\sqrt{ \max(0, 1-\alpha_{-}^2(-1-t+2.5)^2)} + \right.\\
        \sqrt{ \max(0, 1-\alpha_{+}^2(-1-t+2.5)^2)} + \\
        \left.4\sqrt{ \max(0, 1-\alpha^2(-1-t+2.5)^2)}\right]/6, & 1.4\leq t\leq 1.6, \\
        0, & \text{elsewhere},
    \end{cases}
\end{align*}
where $\delta_z = 0.005$, $z=-1.2$, $z_{-} = z-\delta_z$, $z_{+} = z+\delta_z$, $\alpha=10$, $\alpha_{-}=\alpha-\delta_z$, $\alpha_{+}=\alpha+\delta_z$, $\beta = \log(2)/(36\delta_z^2)$. 
The numerical solution and exact solution at time $t=1.9$ is plotted in Figure \ref{Ex3_4shapes}.
\begin{figure}
\centering
\includegraphics[width=0.49\textwidth]{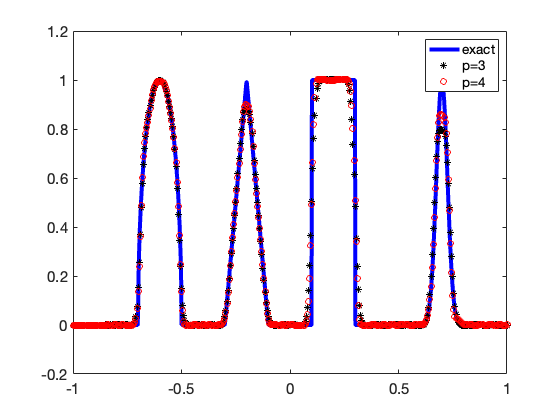}
\includegraphics[width=0.49\textwidth]{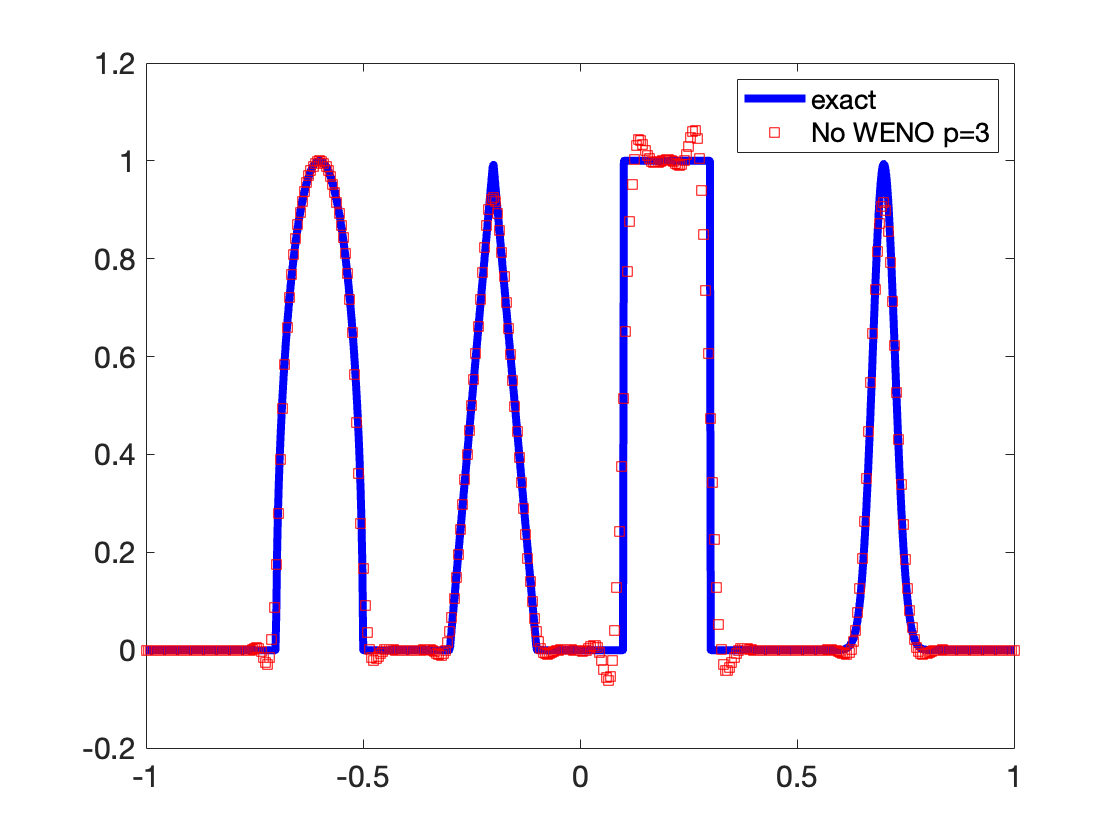}
\caption{Solutions computed with 401 grid points. Left: SBP-WENO scheme with $p=3$ and 4. {\color{black}Right: SBP scheme without WENO.}}
\label{Ex3_4shapes}
\end{figure}
We observe that the numerical solution agrees very well with the exact solution for the SBP-WENO discretization with $p=3$ and 4. On the other hand, the result without the WENO scheme has oscillations near the nonsmooth region.

\section{Conclusion}

In an SBP-SAT finite difference discretization, the truncation error is often larger on a few grid points near the boundaries than in the interior. It is well-known that for first order hyperbolic PDEs, the overall convergence rate is one order higher than the order of truncation error near the boundaries \cite{Gustafsson1981}. However, with an  upwind SBP-SAT method, it was reported that the convergence rate is higher than expected \cite{Mattsson2017}. Using the normal mode analysis, we prove that the convergence rate in an upwind SBP-SAT method  is indeed higher than the results in \cite{Gustafsson1981}. More precisely, we have considered a scheme with third order truncation error in the interior and first order truncation error on a few grid points near boundaries. With a particular choice of the penalty parameters, the convergence rate is 1.5 orders higher than the boundary truncation error, and all other stable choices of the penalty parameters lead to a convergence rate that is one order higher than the boundary truncation error. Both first order hyperbolic equation and hyperbolic systems are considered. We have carried out numerical experiments with higher order discretizations and observed that the penalty parameters play the same role in convergence rate.   

When using an SBP-SAT finite difference method to solve PDEs with nonsmooth data, the numerical solution contains oscillations near the region where the solution is not sufficiently smooth. Using the WENO methodology, we derive an SBP-SAT discretization that can resolve nonsmooth solutions very well, and converges to optimal order for smooth solutions. In addition, the discretization remains energy stable. 

\section*{Acknowledgement}
The authors acknowledge the financial support from STINT (Project IB2019-8542). Yan Jiang is partially supported by NSFC grant 12271499 and Cyrus Tang Foundation.

\appendix
\section*{Appendix: WENO formulas near boundaries} \label{sec:WENO_bound}

\begin{itemize}

\item At $\bar x_0=x_1$: $\hat{u}_0 = u_1$

\item At $\bar x_1=\frac{49}{149}$:
\begin{align*}
\hat{u}_1 
= \frac{25}{48} u_1 + \frac{168}{288} u_2 - \frac{11}{144} u_3 -\frac{1}{32}u_4 
:= d^{(1)}_1 \hat u^{(1)}_1 + d^{(2)}_1 \hat u^{(2)}_1 
\end{align*}
with 
\begin{align*}
    & d^{(1)}_1= \frac{15}{19}, \quad d^{(2)}_{1} = \frac{4}{19}, \\
    & \hat u^{(1)}_1 = \frac{95}{144} u_1 + \frac{49}{144} u_2,\\
    & \hat u^{(2)}_1 = \frac{1741}{1152}u_2 - \frac{209}{576}u_3 - \frac{19}{128} u_4.
\end{align*}
The nonlinear weights are given as
\begin{align*}
    & \beta_1 = (u_2 - u_1)^2 \\
    & \beta_2 = \frac{1}{16}(9u_2 -14u_3 + 5u_4)^2 + \frac{49}{48}(u_2 - 2u_3 +u_4)^2 \\
    &\tau = |\beta_2 -\beta_1|^2, \\
    & w_{1}^{(r)} = \frac{\alpha_{1}^{(r)}}{\alpha_{1}^{(1)} +\alpha_{1}^{(2)}}, \quad
    \alpha_1^{(r)} = d^{(r)}_1 (1+\frac{\tau}{\beta_r + \epsilon}), \quad r=1, 2.
\end{align*}

\item  At  $\bar x_2 = \frac{29}{18}h$:\begin{align*}
\hat{u}_2 
= -\frac{55}{288} u_1 + \frac{235}{288} u_2 + \frac{95}{288} u_3 + \frac{13}{288}u_4 
:= d^{(1)}_2 \hat u^{(1)}_2 + d^{(2)}_2 \hat u^{(2)}_2 
\end{align*}
with 
\begin{align*}
    & d^{(1)}_2= \frac{5}{16}, \quad d^{(2)}_{2} = \frac{11}{16}, \\
    & \hat u^{(1)}_2 = -\frac{11}{18} u_1 + \frac{29}{18} u_2,\\
    & \hat u^{(2)}_2 = \frac{5}{11}u_2 + \frac{95}{198}u_3 + \frac{13}{198} u_4.
\end{align*}
The nonlinear weights are given as
\begin{align*}
    & \beta_1 = (u_2 - u_1)^2 \\
    & \beta_2 = \frac{13}{12}(u_2 -2u_3 + u_4)^2 + \frac{1}{4}(3u_2 - 4u_3 +u_4)^2 \\
    &\tau = |\beta_2 -\beta_1|, \\
    & w_{2}^{(r)} = \frac{\alpha_{2}^{(r)}}{\alpha_{2}^{(1)} +\alpha_{2}^{(2)}}, \quad
    \alpha_2^{(r)} = d_2^{(r)} (1+\frac{\tau}{\beta_r + \epsilon}), \quad r=1, 2.
\end{align*}

\item At  $\bar x_3 = \frac{355}{144}h$:\begin{align*}
\hat{u}_3 
= \frac{1}{96} u_1 - \frac{31}{144} u_2 + \frac{269}{288} u_3 + \frac{13}{48}u_4 
:= d^{(1)}_3 \hat u^{(1)}_3 + d^{(2)}_3 \hat u^{(2)}_3 + d^{(3)}_3 \hat u^{(3)}_3 
\end{align*}
with 
\begin{align*}
    & d^{(1)}_3= \frac{39}{67}, \quad d^{(2)}_{3} = \frac{689}{1809}, \quad d^{(3)}_{3} = \frac{1}{27}, \\
    & \hat u^{(1)}_3 = \frac{77}{144} u_3 + \frac{67}{144} u_4,\\
    & \hat u^{(2)}_3 = -\frac{67}{144} u_2 + \frac{211}{144} u_3,\\
    & \hat u^{(3)}_3 = \frac{9}{32}u_1 - \frac{37}{36}u_2 + \frac{503}{288} u_3.
\end{align*}
The nonlinear weights are given as
\begin{align*}
    & \beta_1 = (u_4 - u_3)^2, \quad \beta_2 = (u_3 - u_2)^2 \\
    & \beta_3 = \frac{13}{12}(u_1 -2u_2 + u_3)^2 + \frac{1}{4}(u_1 - 4u_2 + 3u_3)^2 \\
    &\tau = |\beta_1 + \beta_2 -2\beta_3|, \\
    & w_{3}^{(r)} = \frac{\alpha_{3}^{(r)}}{\alpha_{3}^{(1)} +\alpha_{3}^{(2)}+\alpha_{3}^{(3)}}, \quad
    \alpha_3^{(r)} = d_3^{(r)} (1+\frac{\tau}{\beta_r + \epsilon}), \quad r=1, 2, 3.
\end{align*}

\item At  $\bar x_n = x_n$: $\hat{u}_{n}=u_n$

\item At $\bar x_{n-1}=x_n - \frac{49}{144}h$: 
\begin{align*}
\hat{u}_{n-1} 
= \frac{23}{48} u_n + \frac{205}{288} u_{n-1} - \frac{29}{144} u_{n-2} + \frac{1}{96}u_{n-3} 
:= d^{(1)}_{n-1} \hat u^{(1)}_{n-1} + d^{(2)}_{n-1} \hat u^{(2)}_{n-1} + d^{(3)}_{n-1} \hat u^{(3)}_{n-1} 
\end{align*}
with 
\begin{align*}
    & d^{(1)}_{n-1}= \frac{69}{95}, \quad d^{(2)}_{n-1} = \frac{1127}{4465}, \quad d^{(3)}_{n-1} = \frac{1}{47}, \\
    & \hat u^{(1)}_{n-1} = \frac{95}{144} u_n + \frac{49}{144} u_{n-1},\\
    & \hat u^{(2)}_{n-1} = \frac{239}{144} u_{n-1} - \frac{95}{144} u_{n-2},\\
    & \hat u^{(3)}_{n-1} = \frac{619}{288} u_{n-1} - \frac{59}{36} u_{n-2} + \frac{47}{96} u_{n-3}.
\end{align*}
The nonlinear weights are given as
\begin{align*}
    & \beta_1 = (u_n - u_{n-1})^2, \quad \beta_2 = (u_{n-1} - u_{n-2})^2 \\
    & \beta_3 = \frac{13}{12}(u_{n-1} -2u_{n-2} + u_{n-3})^2 + \frac{1}{4}(3u_{n-1} - 4u_{n-2} + u_{n-3})^2 \\
    &\tau = |\beta_1 + \beta_2 -2\beta_3|, \\
    & w_{n-1}^{(r)} = \frac{\alpha_{n-1}^{(r)}}{\alpha_{n-2}^{(1)} +\alpha_{n-1}^{(2)}+\alpha_{n-1}^{(3)}}, \quad
    \alpha_{n-1}^{(r)} = d_{n-1}^{(r)} (1+\frac{\tau}{\beta_r + \epsilon}), \quad r=1, 2, 3.
\end{align*}

\item At $\bar x_{n-2}=x_n - \frac{29}{18}h$: 
\begin{align*}
\hat{u}_{n-2} 
&= -\frac{31}{288} u_n + \frac{139}{288} u_{n-1} + \frac{239}{288} u_{n-2} - \frac{83}{288}u_{n-3} + \frac{1}{12} u_{n-4} \\
&:= d^{(1)}_{n-2} \hat u^{(1)}_{n-2} + d^{(2)}_{n-2} \hat u^{(2)}_{n-2} + d^{(3)}_{n-2} \hat u^{(3)}_{n-2} 
\end{align*}
with 
\begin{align*}
    & d^{(1)}_{n-2}= \frac{31}{197}, \quad d^{(2)}_{n-2} = \frac{45}{88}, \quad d^{(3)}_{n-2} = \frac{5}{16}, \\
    & \hat u^{(1)}_{n-2} = -\frac{11}{18} u_n + \frac{29}{18} u_{n-1},\\
    & \hat u^{(2)}_{n-2} = \frac{7}{18} u_{n-1} + \frac{11}{18} u_{n-2},\\
    & \hat u^{(3)}_{n-2} = \frac{149}{90} u_{n-2} - \frac{83}{90} u_{n-3} + \frac{24}{90} u_{n-4}.
\end{align*}
The nonlinear weights are given as
\begin{align*}
    & \beta_1 = (u_n - u_{n-1})^2, \quad \beta_2 = (u_{n-1} - u_{n-2})^2 \\
    & \beta_3 = \frac{13}{12}(u_{n-2} -2u_{n-3} + u_{n-4})^2 + \frac{1}{4}(3u_{n-2} - 4u_{n-3} + u_{n-4})^2 \\
    &\tau = |\beta_1 + \beta_2 -2\beta_3|, \\
    & w_{n-2}^{(r)} = \frac{\alpha_{n-2}^{(r)}}{\alpha_{n-2}^{(1)} +\alpha_{n-2}^{(2)}+\alpha_{n-2}^{(3)}}, \quad
    \alpha^{(r)}_{n-2} = d_{n-2}^{(r)} (1+\frac{\tau}{\beta_r + \epsilon}), \quad r=1, 2, 3.
\end{align*}

\item At $\bar x_{n-3}=x_n - \frac{355}{144}h$: 
\begin{align*}
\hat{u}_{n-3} 
=& -\frac{1}{32} u_n + \frac{11}{144} u_{n-1} + \frac{65}{288} u_{n-2} + \frac{17}{16}u_{n-3} - \frac{5}{12} u_{n-4} + \frac{1}{12} u_{n-5} \\ 
:=& d^{(1)}_{n-3} \hat u^{(1)}_{n-3} + d^{(2)}_{n-3} \hat u^{(2)}_{n-3} + d^{(3)}_{n-3} \hat u^{(3)}_{n-3} 
\end{align*}
with 
\begin{align*}
    & d^{(1)}_{n-3}= \frac{5}{53}, \quad d^{(2)}_{n-3} = \frac{3576}{5353}, \quad d^{(3)}_{n-3} = \frac{24}{101}, \\
    & \hat u^{(1)}_{n-3} = -\frac{53}{160} u_n + \frac{583}{720} u_{n-1} - \frac{131}{1440} u_{n-2} + \frac{49}{80} u_{n-3},\\
    & \hat u^{(2)}_{n-3} = \frac{101}{288} u_{n-2} + \frac{5}{6} u_{n-3} - \frac{53}{288} u_{n-4},\\
    & \hat u^{(3)}_{n-3} = \frac{181}{96} u_{n-3} - \frac{89}{72} u_{n-4} + \frac{101}{288} u_{n-5}.
\end{align*}
The nonlinear weights are given as
\begin{align*}
    & \beta_1 = \frac{781}{720}(u_n - 3u_{n-1} +3u_{n-2} - u_{n-3})^2 + \frac{13}{12}(u_n - 4u_{n-1} + 5u_{n-2} - 2u_{n-3})^2 
\\
& \qquad + \frac{1}{64}(3u_n - 13u_{n-1} + 25 u_{n-2} - 15u_{n-3})^2 , \\
    & \beta_2 = \frac{13}{12}(u_{n-2} -2u_{n-3} + u_{n-4})^2 + \frac{1}{4}(u_{n-2} - u_{n-4})^2 \\
    & \beta_3 = \frac{13}{12}(u_{n-3} -2u_{n-4} + u_{n-5})^2 + \frac{1}{4}(3u_{n-3} - 4u_{n-4} + u_{n-5})^2 \\
    &\tau = |4\beta_1 -3 \beta_2 - \beta_3|, \\
    & w_{n-3}^{(r)} = \frac{\alpha_{n-3}^{(r)}}{\alpha_{n-3}^{(1)}+\alpha_{n-3}^{(2)}+\alpha_{n-3}^{(3)}}, \quad
    \alpha_{n-3}^{(r)} = d_{n-3}^{(r)} (1+\frac{\tau}{\beta_r + \epsilon}), \quad r=1, 2, 3.
\end{align*}

\end{itemize}

\bibliographystyle{plain} 
\bibliography{Siyang_References} 

\end{document}